\numberwithin{equation}{section}
\theoremstyle{definition}
\newtheorem{Def}{Definition}[section]
\theoremstyle{remark}
\theoremstyle{plain}
\newtheorem{Prop}[Def]{Proposition}
\newtheorem{Cor}[Def]{Corollary}
\newtheorem{Thm}[Def]{Theorem}
\newtheorem{Lem}[Def]{Lemma}
\newcommand{\dfn}{\doteq}
\newcommand{\st}{ \ ; \ }
\newcommand{\lra}{\longrightarrow}
\newcommand{\sset}{\subset}
\newcommand{\Z}{\mathbb{Z}}
\newcommand{\R}{\mathbb{R}}
\newcommand{\C}{\mathbb{C}}
\newcommand{\TR}[5]{\begin{array}{c c c c c}
    {#1} & : & {#3} & \longrightarrow & {#5}\\
    & & {#2} & \longmapsto & {#4}
  \end{array}
}
\newcommand{\transp}[1]{\prescript{\mathrm{t} \!}{}{{#1}}}
\DeclareMathOperator{\ran}{\mathrm{ran}}
\newcommand{\del}{\partial}
\newcommand{\dd}{\mathrm{d}}
\renewcommand{\Re}{\mathsf{Re}}
\renewcommand{\Im}{\mathsf{Im}}
\newcommand{\D}{\mathscr{D}}
\newcommand{\cinfty}{\mathscr{C}^\infty}
\newcommand{\gev}{\mathscr{G}}
\newcommand{\sob}{\mathscr{H}}
\newcommand{\gr}{\mathfrak}
\DeclareMathOperator{\ad}{\mathrm{ad}}
\newcommand{\vv}{\mathrm}
\author{Gabriel Ara\'{u}jo}
\address{Universidade de S{\~a}o Paulo, ICMC-USP, S{\~a}o Carlos, SP, Brazil}
\email{\texttt{gccsa@icmc.usp.br}}
\author{Igor A.~Ferra}
\address{Universidade Federal do ABC, CMCC-UFABC, S{\~a}o Bernardo do Campo, SP, Brazil}
\email{\texttt{ferra.igor@ufabc.edu.br}}
\author{Luis F.~Ragognette}
\address{Universidade Federal de S{\~a}o Carlos, DM-UFSCar, S{\~a}o Carlos, SP, Brazil}
\email{\texttt{luisragognette@dm.ufscar.br}}
\thanks{This work was supported by the S{\~a}o Paulo Research Foundation (FAPESP, grants~2016/13620-5 and~2018/12273-5).}
\keywords{Gevrey regularity, global hypoellipticity, global solvability, invariant operators.}
\subjclass[2020]{35A01, 35H10 (primary), 35R01, 35R03 (secondary)}
\title[]{Global analytic hypoellipticity and solvability of certain operators subject to group actions}
\begin{document}

\begin{abstract}
  On $T \times G$,
  where $T$ is a compact real-analytic manifold
  and $G$ is a compact Lie group,
  we consider differential operators $P$
  which are invariant by left translations on $G$
  and are elliptic in $T$.
  Under a mild technical condition,
  we prove that
  global hypoellipticity of $P$
  implies its global analytic-hypoellipticity
  (actually Gevrey of any order $s \geq 1$).
  We also study the connection
  between the latter property
  and the notion of
  global analytic (resp.~Gevrey) solvability,
  but in a much more general setup.
\end{abstract}

\maketitle


\section*{Introduction}

Two notoriously difficult problems
in PDE theory are to determine
whether a general linear differential operator
$P$ defined, say, on a compact manifold $M$,
is globally hypoelliptic or globally solvable.
In very general terms,
given a reasonable space of functions
$\mathscr{F}$ on $M$,
the first problem means to determine
if we can infer from the information 
$Pu \in \mathscr{F}$ that
$u$ itself belongs to $\mathscr{F}$,
where $u$ is a priori taken in some
larger space of (generalized) functions.
The second one means to solve
the equation $Pu = f$
for ``every'' $f \in \mathscr{F}$,
with a solution $u$ also in $\mathscr{F}$
-- but a subtlety soon arises,
for this is generally impossible
as the global geometry of both $M$ and $P$
impose natural constraints on
the right-hand side $f$;
we compromise by asking instead
if we can always solve
at least for those ``admissible'' $f \in \mathscr{F}$.

In order to make the situation more manageable,
extra geometrical hypotheses may be imposed to
the problem.
A traditional one is to assume
$M$ endowed with a smooth action
of a Lie group $G$
which leaves $P$ invariant;
the action may be further assumed
transitive
(see e.g.~\cite[Chapter~5]{wallach_haohs}
and related works in the references therein)
or free,
in which case the operator
induced by $P$ on the orbit space $M/G$
plays a key role.

Here we are interested in
the latter possibility,
and deal with the simplest such situation:
$M$ is a product $T \times G$,
where $T$ is a compact manifold
and $G$ is a compact Lie group,
which acts freely on $M$
by left-multiplication on
the second factor alone
(where $G$ acts on itself).
Invariance of $P$ under this action
allows us to put it in a global canonical form
with separate variables~\eqref{eq:Pdefinition},
where the operator
$P_0$ induced by $P$ on the orbit space $T$
can be easily read off;
this one will be further required to be elliptic in $T$.
Operators $P$ satisfying
these properties essentially
are said to belong to class~$\mathcal{T}$
(Definition~\ref{def:classT})
and are our main focus.
When
$M = \mathbb T^n_t \times \mathbb T^m_x$,
where $\mathbb T^N$ is
the $N$-dimensional torus,
the invariance of $P$
under the action of $\mathbb T^m$
means that
$P = P(t,D_t,D_x)$
and there is a vast literature about
global hypoellipticity and solvability
of such operators in this ambient.
Requiring the ellipticity condition on $P_0$
imposes certain restrictions,
but still
there are important classes of such operators,
for instance sums of squares
of certain real vector fields
(see e.g.~\cite{chim94,hp00,albanese11}).

As for the remaining ingredient --
the space of functions $\mathscr{F}$
on which our questions
will be posed --
we are mainly interested in the space
of real-analytic functions
(for which we of course need to require
further regularity of all the objects involved):
global analytic hypoellipticity and solvability
of $P$ are then addressed.
Actually,
our approach applies to the
more general Gevrey classes
of functions $\gev^s$ of order $s \geq 1$
(see e.g.~\cite{rodino_gevrey})
of which real-analytic functions
are a special case ($s = 1$),
and one can even guess that
the program could be carried forward
for other ultradifferentiable classes
of Roumieu type
(even quasianalytic ones).

As far as the
regularity problem is concerned,
we consider the following question:
when does global hypoellipticity
(i.e.~in the smooth setup)
implies
global analytic hypoellipticity?
Positive answers
for this kind of question
first appeared in~\cite{chim94},
where the authors proved,
in contrast to
the famous example due to Baouendi and Goulaouic,
that the H\"ormander condition
(so local hypoellipticity
-- which is much stronger),
together with additional hypotheses
(that for us reads as the
ellipticity condition for
the class $\mathcal T$),
ensure global analytic hypoellipticity.
Similar results
in a more general framework
can be found in~\cite{christ94,brccj16}.
Our main result about this subject
is Theorem~\ref{thm:smooth_hyp_implies_gevrey},
which ensures that every operator
$P$ in class~$\mathcal{T}$
that is globally hypoelliptic
is also
globally analytic hypoelliptic
(actually globally $\gev^s$-hypoelliptic
for every $s \geq 1$).
Theorem~\ref{thm:smooth_hyp_implies_gevrey}
is more properly related
to~\cite[Theorem~2.2]{hp06}
and~\cite[Theorem~1.10]{petronilho09}
(and indeed a kind of generalization of them)
which deal with the case when
both $T$ and $G$ are tori.
Their results
were extended
for more general classes
of functions in~\cite{aj14}. 

The proof of
Theorem~\ref{thm:smooth_hyp_implies_gevrey}
is obtained by first
retrieving global estimates
from microlocal information
(Section~\ref{sec:microlocal})
and then combining them
with some results
(Section~\ref{sec:gevrey_vectors})
related
to the notion of Gevrey vectors
of the partial Laplace-Beltrami operator $\Delta_G$
(associated with a suitable metric on $G$),
a business of interest in PDE on its own.


In Section~\ref{sec:solvability}
we tackle the issue of solvability
from a far more abstract viewpoint.
We forget the existence of symmetries
and prove (Theorem~\ref{thm:gevrey_agh})
that for a general operator
$P$ on a compact manifold $M$
satisfying a regularity property
much weaker than
global $\gev^s$-hypoellipticity
we have that the map
$P: \gev^s(M) \to \gev^s(M)$
has closed range --
which, we argue,
is the correct notion
of global $\gev^s$-solvability
(an analogous result in the smooth category
has been recently proved to be true~\cite{afr20b}).
This result follows from a theory
of regularity for abstract operators
acting on certain pairs of
topological vector spaces
started in~\cite{araujo18}
whose development we continue here.
The reader will notice that,
thanks to the generality of our approach,
the proofs here apply equally well to
many other classes of operators:
certain pseudodifferential operators
acting on $\gev^s(M)$
(not necessarily of finite order),
and so on;
either scalar or vector-valued.

Finally,
we turn back to the situation
with symmetries
linking our class $\mathcal T$
with the general property addressed
in the abstract results of Section~\ref{sec:solvability}:
a converse of Theorem~\ref{thm:gevrey_agh}
is proved for operators in that class
(Proposition~\ref{prop:converse_agh_classT}).

\section{Preliminaries}
\label{sec:prelim}

Let $M$ be a real-analytic manifold,
assumed throughout to be
compact, connected and oriented.
The space $\gev^s(M)$ of globally defined
Gevrey functions~\cite{rodino_gevrey}
can be characterized
by means of the powers of a suitable
real-analytic elliptic operator~\cite{kn62,bcm79}:
here, we endow $M$ with a real-analytic Riemannian metric
(which is always possible~\cite{grauert58})
and denote by $\Delta_M$
the underlying Laplace-Beltrami operator
acting on functions,
in which case a smooth $f$ belongs to $\gev^s(M)$
if and only if there exist $C, h > 0$ such that
\begin{equation*}
  \| (I + \Delta_M)^k f \|_{L^2(M)} \leq C h^k k!^{2s},
  \quad \forall k \in \Z_+.
\end{equation*}
The case $s = 1$ describes
the space of real-analytic functions.
As in~\cite{araujo18},
we furnish topologies
to these spaces as follows:
for each $h > 0$ we let
\begin{equation*}
  \gev^{s, h}(M) \dfn  \Big\{ f \in \cinfty(M) \st \sup_{k\in \Z_+} h^{-k} k!^{-2s} \| (I + \Delta_M)^k f \|_{L^2(M)} < \infty \Big\}
\end{equation*}
which is a Banach space;
as the parameter $h$ increases,
these are contained into
one another in a continuous and compact fashion,
meaning that their union $\gev^s(M)$,
now endowed with the injective limit topology,
is what one calls a DFS space.

We denote by $\sigma(\Delta_M)$
the spectrum of $\Delta_M$.
By ellipticity of $\Delta_M$,
its $\lambda$-eigenspace $E_\lambda^M$ is
a finite dimensional subspace of $\gev^1(M)$;
connectedness of $M$ ensures that $E_0^M = \C$.
Denoting by 
$\mathcal{F}_\lambda^M: L^2(M) \to E_\lambda^M$
the orthogonal projection
we have\
\begin{equation}
  \label{eq:full_fourier_series}
  f = \sum_{\lambda \in \sigma(\Delta_M)} \mathcal{F}_\lambda^M(f)
\end{equation}
with convergence in $L^2(M)$;
this is an abstract analog of
Fourier series.
As such,
we extend it to Schwartz distributions:
given $f \in \D'(M)$
-- which we identify with a continuous linear functional
on $\cinfty(M)$ using the underlying volume form --
we let $\mathcal{F}_\lambda^M(f)$ be the unique element
in $E_\lambda^M$ such that
\begin{equation*}
  \langle \mathcal{F}_\lambda^M(f), \phi \rangle_{L^2(M)} = \langle f, \bar{\phi} \rangle,
  \quad \forall \phi \in E_\lambda^M.
\end{equation*}
In this situation~\eqref{eq:full_fourier_series} still holds,
but with convergence in $\D'(M)$;
when $f$ is smooth this convergence takes place in $\cinfty(M)$,
and so on.
Thanks to Weyl's asymptotic estimates~\cite[p.~155]{chavel_eigenvalues},
a distribution $f$ 
belongs to $\gev^s(M)$ if and only
if
there exist $C, h > 0$ such that
\begin{equation*}
  \| \mathcal{F}_\lambda^M(f) \|_{L^2(M)} \leq C e^{-h (1 + \lambda)^{\frac{1}{2s}}},
  \quad \forall \lambda \in \sigma(\Delta_M).
\end{equation*}
This allows us to consider
alternatively the adapted norms
\begin{equation}
  \label{eq:tildeGevreynorm}
  \| f \|_{\tilde{\gev}^{s,h}(M)}
  \dfn \Big( \sum_{\lambda \in \sigma(\Delta_M)} e^{2h(1 + \lambda)^{\frac{1}{2s}}}  \| \mathcal{F}^M_\lambda (f) \|_{L^2(M)}^2 \Big)^{\frac{1}{2}}
\end{equation}
and the spaces
\begin{equation}
  \label{eq:tildeGevreyspace}
  \tilde{\gev}^{s,h}(M) \dfn \{ f \in \cinfty(M) \st \| f \|_{\tilde{\gev}^{s,h}(M)} < \infty \}
\end{equation}
which are better suited
to some applications
(e.g.~in the proof of
Lemma~\ref{lem:global_ineq_microl_hormander_gevrey});
as $h > 0$ increases,
this produces new
Banach spaces compactly contained
in one another and
forming a directed system equivalent
to the previous one,
hence with the same
injective limit $\gev^s(M)$
-- both set-theoretically and topologically.
On time,
it is also convenient
to introduce the following
adapted Sobolev norms:
\begin{equation}
  \label{eq:adaptedSobolev}
  \| f \|_{\sob^t(M)}
  \dfn \Big( \sum_{\lambda \in \sigma(\Delta_M)} (1 + \lambda)^{2t} \| \mathcal{F}_\lambda^M (f) \|_{L^2(M)}^2 \Big)^{\frac{1}{2}},
  \quad t > 0.
\end{equation}
  
When $M$ is a product $T \times G$ of two such manifolds
and carrying the product metric,
an abstract theory of \emph{partial} Fourier series
can also be developed
(for details see e.g.~\cite{afr20, afr20b}).
In that case one proves that
$\Delta_M = \Delta_T + \Delta_G$
as differential operators on $M$,
that any $\alpha \in \sigma(\Delta_M)$ is of the form
$\alpha = \mu + \lambda$ for some
$\mu \in \sigma(\Delta_T)$ and $\lambda \in \sigma(\Delta_G)$
(and vice versa), and
\begin{equation*}
  E_\alpha^M
  = \bigoplus_{\substack{\mu \in \sigma(\Delta_T) \\ \lambda \in \sigma(\Delta_G) \\ \mu + \lambda = \alpha}} E_\mu^T \otimes E_\lambda^G.
\end{equation*}
Moreover,
given $\mu \in \sigma(\Delta_T)$ and $\lambda \in \sigma(\Delta_G)$
we will fix, whenever necessary,
bases for $E_\mu^T$ and $E_\lambda^G$
\begin{align*}
  \{ \psi^\mu_i \st 1 \leq i \leq d^T_\mu \},
  &\quad \text{where $d^T_\mu \dfn \dim E^T_\mu$}, \\
  \{ \phi^\lambda_j \st 1 \leq j \leq d^G_\lambda \},
  &\quad \text{where $d^G_\lambda \dfn \dim E^G_\lambda$},
\end{align*}
which are orthonormal w.r.t.~the inner products
inherited from $L^2(T), L^2(G)$, respectively,
in which case
\begin{equation*}
  \mathcal{S} \dfn \{ \psi^\mu_i \otimes \phi^\lambda_j \st 1 \leq i \leq d^T_\mu, \ 1 \leq j \leq d^G_\lambda, \ \mu \in \sigma(\Delta_T), \ \lambda \in \sigma(\Delta_G) \}
\end{equation*}
is a Hilbert basis for $L^2(T \times G)$.
Now given $f \in \D'(T \times G)$
and $\lambda \in \sigma(\Delta_G)$
one defines an object
$\mathcal F_\lambda^G (f) \in \D'(T;E_\lambda^G) \cong \D'(T)\otimes E_\lambda^G$
that in terms of our choice of basis
can be concretely written as
\begin{equation*}
  \mathcal{F}_\lambda^G (f) = \sum_{j=1}^{d_\lambda^G} \mathcal{F}_\lambda^G(f)_j \otimes \phi_j^\lambda,
\end{equation*}
where $\mathcal F_\lambda^G(f)_j \in \D'(T)$ is defined by
\begin{equation*}
  \langle \mathcal F_\lambda^G(f)_j, \psi \rangle \dfn \langle f, \psi \otimes \overline{\phi^\lambda_j} \rangle,
  \quad \forall \psi \in \cinfty(T).
\end{equation*}
The ``total'' Fourier projection of $f$
can then be recovered from the partial ones as
\begin{equation*}
  \mathcal{F}^{M}_\alpha(f) = \sum_{\mu + \lambda=\alpha} \mathcal{F}^{T}_\mu \mathcal{F}^G_\lambda(f),
  \quad \alpha \in \sigma(\Delta_M).
\end{equation*}

Gevrey functions can also be described
by the ``partial'' Fourier projections:
\begin{Prop}
  \label{Pro:partial-convergence-Gs}
  If $f \in \gev^s (T \times G)$
  then
  $\mathcal{F}_\lambda^G (f) \in \gev^s (T; E_\lambda^G)$
  for every
  $\lambda$
  and
  \begin{equation*}
    f = \sum_{\lambda \in \sigma (\Delta_G)} \mathcal{F}_\lambda^G (f)  
  \end{equation*}
  with convergence in
  $\gev^s(T \times G)$.
\end{Prop}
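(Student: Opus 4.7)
The plan is to reduce both assertions to spectral estimates in the norms $\|\cdot\|_{\tilde{\gev}^{s,h}(M)}$ from~\eqref{eq:tildeGevreynorm}, relying on the compatibility identity $\mathcal{F}_\alpha^M(f) = \sum_{\mu+\lambda=\alpha} \mathcal{F}_\mu^T \mathcal{F}_\lambda^G(f)$ and on the fact that the decomposition of $E_\alpha^M$ into the pieces $E_\mu^T \otimes E_\lambda^G$ is orthogonal in $L^2(M)$. Starting from the hypothesis I may write $f \in \tilde{\gev}^{s,h}(M)$ for some $h > 0$; orthogonality then immediately gives
\begin{equation*}
  \| \mathcal{F}_\mu^T \mathcal{F}_\lambda^G(f) \|_{L^2(M)}
  \leq \| \mathcal{F}_{\mu+\lambda}^M(f) \|_{L^2(M)}
  \leq C\, e^{-h(1+\mu+\lambda)^{1/(2s)}}
\end{equation*}
for every $\mu \in \sigma(\Delta_T)$ and $\lambda \in \sigma(\Delta_G)$.

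For the first assertion, fix $\lambda$ and expand $\mathcal{F}_\mu^T \mathcal{F}_\lambda^G(f) = \sum_j \mathcal{F}_\mu^T \mathcal{F}_\lambda^G(f)_j \otimes \phi_j^\lambda$. Using the orthonormality of $\{\phi_j^\lambda\}$, the bound above dominates each $\| \mathcal{F}_\mu^T \mathcal{F}_\lambda^G(f)_j \|_{L^2(T)}$; since $\lambda \geq 0$, one has $(1+\mu+\lambda)^{1/(2s)} \geq (1+\mu)^{1/(2s)}$, so $\| \mathcal{F}_\mu^T \mathcal{F}_\lambda^G(f)_j \|_{L^2(T)} \leq C\, e^{-h(1+\mu)^{1/(2s)}}$ for every $\mu$. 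The spectral characterization of $\gev^s(T)$ then places each component $\mathcal{F}_\lambda^G(f)_j$ in $\gev^s(T)$, hence $\mathcal{F}_\lambda^G(f) \in \gev^s(T; E_\lambda^G)$.

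For the convergence, set $S_N \dfn \sum_{\lambda \leq N} \mathcal{F}_\lambda^G(f)$. Since each $\mathcal{F}_\lambda^G(f)$ sits in the $\lambda$-slice of the $G$-spectral decomposition, I have $\mathcal{F}_\alpha^M(f - S_N) = \sum_{\mu+\lambda=\alpha,\ \lambda > N} \mathcal{F}_\mu^T \mathcal{F}_\lambda^G(f)$, which vanishes for $\alpha \leq N$ (as $\mu \geq 0$ forces $\alpha \geq \lambda > N$) and for $\alpha > N$ satisfies $\| \mathcal{F}_\alpha^M(f - S_N) \|_{L^2(M)} \leq \| \mathcal{F}_\alpha^M(f) \|_{L^2(M)}$ by orthogonality. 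Picking $0 < h' < h$, the factorisation $e^{2h'(1+\alpha)^{1/(2s)}} = e^{-2(h-h')(1+\alpha)^{1/(2s)}} e^{2h(1+\alpha)^{1/(2s)}}$ and the estimate $e^{-2(h-h')(1+\alpha)^{1/(2s)}} \leq e^{-2(h-h')(1+N)^{1/(2s)}}$ valid for $\alpha > N$ yield
\begin{equation*}
  \| f - S_N \|_{\tilde{\gev}^{s,h'}(M)}^2
  \leq e^{-2(h-h')(1+N)^{1/(2s)}}\, \| f \|_{\tilde{\gev}^{s,h}(M)}^2,
\end{equation*}
which tends to $0$. Since $\tilde{\gev}^{s,h'}(M) \hookrightarrow \gev^s(T \times G)$ continuously, this produces the required convergence in the DFS space $\gev^s(T \times G)$.

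There is no genuinely hard step here: the whole argument is bookkeeping on the two-parameter spectral side, powered by the orthogonality of the bi-spectral decomposition. The only point demanding a bit of care is fitting all partial sums inside a single Banach step of the inductive system, which is precisely why I allow the small loss $h \leadsto h'$ in the Gevrey parameter.
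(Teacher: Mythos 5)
Your argument is correct. Note that the paper itself states this Proposition without proof, deferring to the references on partial Fourier series cited just above it, so there is no in-paper proof to compare against; your spectral bookkeeping --- orthogonality of the blocks $E_\mu^T \otimes E_\lambda^G$ inside $E_{\mu+\lambda}^M$, the bound $\| \mathcal{F}_\mu^T \mathcal{F}_\lambda^G(f) \|_{L^2} \leq \| \mathcal{F}_{\mu+\lambda}^M(f) \|_{L^2}$, and the tail estimate in $\tilde{\gev}^{s,h'}(M)$ with the loss $h \leadsto h'$ to absorb the partial sums into a single Banach step --- is exactly the natural route and is consistent with the machinery the paper sets up in Section~\ref{sec:prelim}.
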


\section{A class of invariant operators}

Let $T$ be a real-analytic
Riemannian manifold,
assumed compact, connected and oriented,
and let $G$ be a compact,
connected Lie group,
carrying a Riemannian metric
which is $\ad$-invariant~\cite[Proposition~4.24]{knapp_lgbi}.
We denote by $\gr{g}$
the Lie algebra associated with $G$.
Let $P$ be a LPDO on $T \times G$
that is invariant
by the left action of $G$,
i.e.,
if $L_g: T \times G \lra T \times G$
is defined by
$L_g(t, x) \dfn (t, gx)$
then for every
$u \in \cinfty(T \times G)$ we have
\begin{equation}
  \label{eq:invariance}
  (L_g)^*(P u) = P[(L_g)^* u],
  \quad \forall g \in G.
\end{equation}
We call such operators
\emph{$G$-invariant}.
By choosing a basis $\vv{X}_1, \ldots, \vv{X}_m \in \gr{g}$
-- which we regard as a global frame for the tangent space of $G$ --
we may write $P$ as follows:
\begin{equation}
  \label{eq:Pdefinition}
  P = \sum_{|\alpha|  \leq r} P_{\alpha} \vv{X}^{\alpha}
\end{equation}
where $P_{\alpha}$ is a LPDO on $T$
and
$\vv{X}^{\alpha} = \vv{X}_{1}^{\alpha_1} \ldots \vv{X}_{m}^{\alpha_m}$
for each multi-index $\alpha = (\alpha_1, \ldots, \alpha_m) \in \Z_+^m$.

Indeed,
let $U \sset T$ and $V \sset G$
be coordinate open sets, i.e.,
there exist $\tilde{U} \subset \R^{n}$ and $\tilde{V} \subset \R^{m}$
and real-analytic charts
\begin{equation*}
  \phi: U \lra \tilde{U},
  \quad
  \psi: V \lra \tilde{V}.
\end{equation*}
We can write,
in $U \times V$,
\begin{equation*}
  P = \sum_{|\alpha|+|\beta| \leq r} a^{U,V}_{\alpha,\beta} \ \partial_t^\beta \vv{X}^\alpha,
  \quad a^{U,V}_{\alpha,\beta} \in \cinfty (U \times V)
\end{equation*}
and first we are going to prove that
if $\psi_1: V_1 \lra \tilde{V}_1$
and $\psi_2: V_2 \lra \tilde{V}_2$
are two charts with $V_1\cap V_2 \neq \varnothing$
then
\begin{equation}
  \label{eq:face-operator-eq1}
  a^{U,V_1}_{\alpha,\beta} = a^{U,V_2}_{\alpha,\beta}
  \quad \text{in $U \times (V_1 \cap V_2)$}
\end{equation}
for every $|\alpha|+|\beta| \leq r$.
It is enough to prove that
\begin{equation*}
  a_{\alpha,\beta}^{U,V_1} \circ (\phi \times \psi_1)^{-1}
  = a_{\alpha,\beta}^{U,V_2} \circ (\phi \times \psi_1)^{-1}
  \quad \text{in $\tilde{U} \times \psi_1 (V_1 \cap V_2)$}.
\end{equation*}
If we denote
the transition map 
\begin{equation*}
  \xi
  \dfn (\phi \times \psi_2) \circ (\phi \times \psi_1)^{-1}
  = \mathrm{Id}_{\tilde{U}} \times (\psi_2 \circ \psi_1^{-1})
  : \tilde{U} \times \psi_1(V_1 \cap V_2) \lra \tilde{U} \times \psi_2(V_1 \cap V_2)
\end{equation*}
then the pullback
$[ (\phi \times \psi_1)^{-1} ]^* P$
of $P$ to the open set
$\tilde{U} \times \psi_1 (V_1 \cap V_2)$
is given by\footnote{Notice the ambiguity in the notation:
  we are denoting by
  $\del_{t_1}, \ldots, \del_{t_n}$
  both the standard Euclidean partial derivatives
  on $\tilde{U} \sset \R^n$
  and their pullbacks to $U$ via $\phi$.}
\begin{align*}
  [ (\phi \times \psi_1)^{-1}] ^* P
  &= \xi^* [ (\phi \times \psi_2)^{-1} ]^*
  \left( \sum_{|\alpha| + |\beta| \leq r} a_{\alpha,\beta}^{U,V_2} \ \partial_{t}^\beta \vv{X}^\alpha \right) \\
  &= \xi^*
  \left( \sum_{|\alpha| + |\beta| \leq r} a_{\alpha,\beta}^{U,V_2} \circ (\phi \times \psi_2)^{-1} \ \partial_{t}^\beta \left(\psi_2^{-1}\right)^* \vv{X}^\alpha\right) \\
  &= \sum_{|\alpha| + |\beta| \leq r} a_{\alpha,\beta}^{U,V_2} \circ (\phi \times \psi_2)^{-1} \circ \xi \ \partial_{t}^\beta (\psi_2 \circ \psi_1^{-1})^* (\psi_2^{-1})^* \vv{X}^\alpha \\
  &= \sum_{|\alpha| + |\beta| \leq r} a_{\alpha,\beta}^{U,V_2} \circ (\phi \times \psi_2)^{-1} \circ \xi \ \partial_{t}^\beta (\psi_1^{-1})^* \vv{X}^\alpha.
\end{align*}
On the other hand
\begin{align*}
  [ (\phi \times \psi_1)^{-1} ]^* P
  &= [ (\phi \times \psi_1)^{-1}]^*
  \left( \sum_{|\alpha| + |\beta| \leq r} a^{U,V_1}_{\alpha,\beta} \ \partial_t^\beta \vv{X}^\alpha \right) \\
  &= \sum_{|\alpha| + |\beta| \leq r} a^{U,V_1}_{\alpha,\beta} \circ (\phi \circ \psi_1)^{-1} \ \partial_t^\beta (\psi_1^{-1})^* \vv{X}^\alpha.
\end{align*}
Since
$\{ \phi^* \partial_{t_1}, \ldots, \phi^* \partial_{t_n}, \vv{X}_1, \ldots, \vv{X}_m \}$
is a frame in
$U \times (V_1 \cap V_2)$
then 
\begin{equation*}
  \{ \partial_{t_1}, \ldots, \partial_{t_n}, (\psi_1^{-1})^* \vv{X}_1, \ldots, (\psi_1^{-1})^* \vv{X}_m \}
\end{equation*}
is a frame in
$\tilde{U} \times \psi_1 (V_1 \cap V_2)$,
hence by~\cite[Theorem~1.1.2]{varadarajan84}
we conclude that
\begin{equation*}
  a_{\alpha,\beta}^{U,V_1} \circ (\phi \times \psi_2)^{-1} \circ \xi
  = a^{U,V_1}_{\alpha,\beta} \circ (\phi \times \psi_1)^{-1}
\end{equation*}
for every
$|\alpha| + |\beta| \leq r$,
which in turn gives~\eqref{eq:face-operator-eq1}. 
Hence for a given coordinate chart
$(U, \phi)$ in $T$ as above,
if we set
$a^{U}_{\alpha,\beta}(t,x) = a^{U,V}_{\alpha,\beta}(t,x)$
for $x$ in the domain of some coordinate chart $(V, \psi)$
then $a^{U}_{\alpha,\beta} \in \cinfty(U \times G)$
and we can write
\begin{equation*}
  P = \sum_{|\alpha| + |\beta| \leq r} a^{U}_{\alpha,\beta} \ \partial_t^\beta \vv{X}^\alpha
  \quad \text{in $U \times G$}.
\end{equation*}

The next step is to prove that
$a^{U}_{\alpha,\beta}$ does not depend
on the $x$-variable.
We employ~\eqref{eq:invariance};
notice that both 
$\partial_t^\beta$ and $\vv{X}^\alpha$
are $G$-invariant as well
-- the latter by left-invariance of
$\vv{X}_1, \ldots, \vv{X}_m$.
For $u \in \cinfty_c(U \times G)$
we have
\begin{align*}
  \sum_{|\alpha| + |\beta|\leq r} a^{U}_{\alpha,\beta}(t,gx) \partial_t^\beta \vv{X}^\alpha u(t,gx)
  &= (L_g)^* (Pu)(t,x) \\
  &= (P[(L_g)^*u])(t,x) \\
  &= \sum_{|\alpha| + |\beta| \leq r} a^{U}_{\alpha,\beta}(t,x) \partial_t^\beta \vv{X}^\alpha [(L_g)^*u] (t,x) \\
  &= \sum_{|\alpha| + |\beta| \leq r} a^{U}_{\alpha,\beta}(t,x) [(L_g)^* (\partial_t^\beta \vv{X}^\alpha u)] (t,x) \\
  &= \sum_{|\alpha| + |\beta| \leq r} a^{U}_{\alpha,\beta}(t,x) \partial_t^\beta \vv{X}^\alpha u (t, gx).
\end{align*}
Since $u$ is arbitrary
this shows that, as LPDOs in $U \times G$,
\begin{equation*}
  \sum_{|\alpha| + |\beta| \leq r} a^{U}_{\alpha,\beta}(t,gx) \ \partial_t^\beta \vv{X}^\alpha
  = \sum_{|\alpha| + |\beta| \leq r} a^{U}_{\alpha,\beta}(t,x) \ \partial_t^\beta \vv{X}^\alpha,
  \quad \forall g \in G.
\end{equation*}
Thanks to~\cite[Theorem~1.1.2]{varadarajan84}
again we conclude that
$a^{U}_{\alpha,\beta}(t,gx) = a^{U}_{\alpha,\beta}(t,x)$
for every
$t \in U$ and $x, g \in G$,
which proves that $a^{U}_{\alpha,\beta}$ does not depend on $x$. 

In short,
we have proved that
given a coordinate chart
$(U, \phi)$ in $T$,
there exist functions
$a^U_{\alpha,\beta} \in \cinfty (U)$
such that
\begin{equation*}
  P = \sum_{|\alpha| + |\beta| \leq r} a^U_{\alpha,\beta} \ \partial_t^\beta \vv{X}^\alpha
  \quad \text{in $U \times G$}.
\end{equation*}
Thus if we write
$P_{U, \alpha} \dfn \sum_{|\beta| \leq r-|\alpha|} a^U_{\alpha,\beta} \partial_t^\beta$
then $P_{U, \alpha}$ is a LPDO in $U$,
\begin{equation*}
  P = \sum_{|\alpha| \leq r} P_{U, \alpha} \vv{X}^\alpha
  \quad \text{in $U \times G$}
\end{equation*}
and it remains to prove that
$P_{U,\alpha}$ is in fact globally defined,
that is if
$\phi_1: U_1 \lra \tilde{U}_1$
and
$\phi_2: U_2 \lra \tilde{U}_2$
are two charts in $T$ then
$P_{U_1, \alpha} = P_{U_2, \alpha}$ in $U_1 \cap U_2$.
The latter equality means that
\begin{equation*}
  P_{U_1, \alpha} (v) = P_{U_2, \alpha}(v),
  \quad \forall v \in \cinfty_c(U_1 \cap U_2).
\end{equation*}
If we fix
$v \in \cinfty_c(U_1 \cap U_2)$
and $t_0 \in U_1 \cap U_2$ then
\begin{equation*}
  Q_1 \doteq \sum_{|\alpha| \leq r} [P_{U_1, \alpha}(v)(t_0)] \ \vv{X}^\alpha
  \quad \mbox{and} \quad
  Q_2 \doteq \sum_{|\alpha| \leq r} [P_{U_2, \alpha}(v)(t_0)] \ \vv{X}^\alpha
\end{equation*}
are LPDOs in $G$
and satisfy
\begin{equation*}
  Q_1(h)(x) = P(v \otimes h)(t_0,x) = Q_2(h)(x),
  \quad \forall h \in \cinfty(G),
\end{equation*}
so $Q_1 = Q_2$.
Then we can use
one more time~\cite[Theorem~1.1.2]{varadarajan84}
to conclude that
$P_{U_1, \alpha}(v)(t_0) = P_{U_2,\alpha}(v)(t_0)$
for every $|\alpha| \leq r$;
since $t_0 \in U_1 \cap U_2$
and $v \in \cinfty_c(U_1 \cap U_2)$
are arbitrary
our proof is complete.

It is then clear that
$[P, \Delta_G] = 0$
(since the $P_\alpha$ are operators in $T$ only
and all the vector fields
$\vv{X}_1, \ldots, \vv{X}_m$
commute with $\Delta_G$
-- thanks to the general remark that
left-invariant vector fields on $G$ always commute
with the Laplace-Beltrami operator associated to
an $\ad$-invariant metric). 
As such,
$P$ acts as an endomorphism of both
$\cinfty(T; E_\lambda^G)$ and $\D'(T; E_\lambda^G)$
for each $\lambda \in \sigma(\Delta_G)$,
which induces by restriction a differential operator
$\widehat{P}_\lambda$ on $T \times E_\lambda^G$,
the latter regarded as a trivial vector bundle over $T$.
These operators are expressed as follows:
given $\psi \in \cinfty(T; E_\lambda^G)$,
written as
\begin{equation*}
  \psi = \sum_{i = 1}^{d_\lambda^G} \psi_i \otimes \phi_i^\lambda,
  \quad \psi_i \in \cinfty(T),
\end{equation*}
we have
\begin{equation*}
  \widehat{P}_\lambda \psi
  =  \sum_{i = 1}^{d_\lambda^G}(P_{0} \psi_i) \otimes \phi_i^\lambda
  + \sum_{i = 1}^{d_\lambda^G} \sum_{0<|\alpha|\leq r}(P_{\alpha} \psi_i) \otimes (\vv{X}^{\alpha} \phi_i^\lambda)
\end{equation*}
which we put on canonical form recalling that
\begin{equation*}
  \vv{X}^{\alpha} \phi_i^\lambda = \sum_{j = 1}^{d_\lambda^G} \gamma_{ij}^{\lambda \alpha} \phi_j^\lambda,
  \quad \gamma_{ij}^{\lambda \alpha} \in \C,
\end{equation*}
hence
\begin{equation}
  \label{eq:localPlambda}
  \widehat{P}_\lambda \psi
  = \sum_{j = 1}^{d_\lambda^G} \left( P_0\psi_j
  + \sum_{i = 1}^{d_\lambda^G} \sum_{0<|\alpha|  \leq r}  \gamma_{ij}^{\lambda \alpha} P_\alpha \psi_i\right) \otimes \phi_j^\lambda.
\end{equation}

From here on
we will assume that $P$ and $P_0$
have the same order $r$.
Thus the order of $P_{\alpha}$ is at most $ r-|\alpha|$,
and expression~\eqref{eq:localPlambda}
reveals that $\widehat{P}_\lambda$,
as a differential operator on the vector bundle
$T \times E_\lambda^G$ over $T$,
also has order $r$:
its principal part
is essentially the same as that of $P_0$.
Actually,
under such circumstances 
the following identity between
their principal symbols holds:
\begin{equation}
  \label{eq:relationship_symbols}
  \mathrm{Symb}_{(t_0, \tau_0)} (\widehat{P}_\lambda)
  = \mathrm{Symb}_{(t_0, \tau_0)} (P_0) \cdot \mathrm{id}_{E^G_\lambda},
  \quad \forall (t_0, \tau_0) \in T^* T \setminus 0.
\end{equation}

Indeed, recall that
the principal symbol of $\widehat{P}_\lambda$
may be regarded as a linear map
\begin{align*}
  \mathrm{Symb}_{(t_0, \tau_0)} (\widehat{P}_\lambda): E_\lambda^G \lra E_\lambda^G
\end{align*}
which can be computed as follows.
Given $\psi \in \cinfty(T; \R)$ such that $\dd \psi(t_0) = \tau_0$
we have, for every $\phi \in E_\lambda^G$:
\begin{equation*}
  \mathrm{Symb}_{(t_0, \tau_0)} (\widehat{P}_\lambda) \phi
  = \lim_{\rho \to \infty} \rho^{-r} \left. e^{-i \rho \psi} \widehat{P}_\lambda(e^{i \rho \psi} (1_T \otimes \phi)) \right|_{t_0}
  = \lim_{\rho \to \infty} \rho^{-r} \left. e^{-i \rho \psi} \widehat{P}_\lambda(e^{i \rho \psi} \otimes \phi) \right|_{t_0}
\end{equation*}
Here,
$1_T \otimes \phi$ plays the role of
a section of $T \times E^G_\lambda$
whose value at $t_0$ is $\phi$
and $r$ is the order of $\widehat{P}_\lambda$.
But
\begin{equation*}
  \widehat{P}_\lambda(e^{i \rho \psi} \otimes \phi)
  = P_0 (e^{i \rho \psi}) \otimes \phi
  + \sum_{0< |\alpha| \leq r} P_{\alpha} (e^{i \rho \psi}) \otimes \vv{X}^{\alpha} \phi;
\end{equation*}
note now that when $|\alpha| > 0$
we have that
$e^{-i\rho\psi}P_{\alpha} (e^{i \rho \psi})$
is a polynomial in $\rho$
of degree strictly less then $r$, 
and since the order of $P_0$ is $r$ we have
\begin{align*}
  \mathrm{Symb}_{(t_0, \tau_0)} (\widehat{P}_\lambda) \phi
  &= \left( \lim_{\rho \to \infty} \rho^{-r}  e^{-i \rho \psi} P_0 (e^{i \rho \psi}) \Big|_{t_0} \right) \phi
  + \sum_{0<|\alpha| \leq r }  \left( \lim_{\rho \to \infty} \rho^{-r}   e^{-i\rho\psi}P_{\alpha} (e^{i \rho \psi})\Big|_{t_0} \right) \vv{X}^{\alpha} \phi \\
  &= ( \mathrm{Symb}_{(t_0, \tau_0)} (P_0)) \phi
\end{align*}
thus proving~\eqref{eq:relationship_symbols}.

In particular:
\begin{Prop}
  \label{prop:ell_plambda}
  Suppose that
  $P$ as in~\eqref{eq:Pdefinition}
  has the property that
  $P$ and $P_0$ have the same order.
  Then the following are equivalent:
  \begin{enumerate}
  \item $P_0$ is elliptic in $T$.
  \item $\widehat{P}_\lambda$ is elliptic in $T \times E^G_\lambda$
    for some $\lambda \in \sigma(\Delta_G)$.
  \item $\widehat{P}_\lambda$ is elliptic in $T \times E^G_\lambda$
    for every $\lambda \in \sigma(\Delta_G)$.
  \end{enumerate}
\end{Prop}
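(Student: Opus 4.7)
The plan is to derive all three equivalences immediately from the identity \eqref{eq:relationship_symbols} established just above the statement. Recall that $\widehat{P}_\lambda$, viewed as a differential operator on the trivial vector bundle $T \times E^G_\lambda \to T$, is elliptic at a covector $(t_0,\tau_0) \in T^*T \setminus 0$ precisely when its principal symbol there, regarded as an endomorphism of the fiber $E^G_\lambda$, is a linear isomorphism.

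First I would observe that for every $\lambda \in \sigma(\Delta_G)$ the eigenspace $E^G_\lambda$ is nonzero by the very definition of the spectrum, so the scalar multiple $c \cdot \mathrm{id}_{E^G_\lambda}$ of the identity is invertible if and only if $c \neq 0$. Combining this with \eqref{eq:relationship_symbols}, ellipticity of $\widehat{P}_\lambda$ at $(t_0,\tau_0)$ is equivalent to the nonvanishing of $\mathrm{Symb}_{(t_0,\tau_0)}(P_0)$, i.e., to ellipticity of $P_0$ at $(t_0,\tau_0)$. Since this pointwise equivalence is entirely independent of the particular $\lambda$ chosen, the three conditions become tautologically equivalent: the implications $(1) \Rightarrow (3)$ and $(3) \Rightarrow (2)$ follow by reading the equivalence forward for every $\lambda$ (resp.~for some $\lambda$), while $(2) \Rightarrow (1)$ comes from applying it in reverse for the single value of $\lambda$ furnished by hypothesis (2), which suffices because the equivalence only depends on whether the scalar $\mathrm{Symb}_{(t_0,\tau_0)}(P_0)$ vanishes.

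There is no genuine obstacle here; the entire content is bookkeeping around \eqref{eq:relationship_symbols}, which was already proved above the statement. If one wished to be pedantic, the only point worth expanding is that for a scalar operator $c \cdot \mathrm{id}_V$ on a nonzero finite-dimensional space $V$, invertibility is equivalent to $c \neq 0$ — so the writeup can be kept to a couple of lines.
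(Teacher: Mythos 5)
Your proof is correct and follows essentially the same route as the paper: both arguments reduce ellipticity of $\widehat{P}_\lambda$ to the nonvanishing of $\mathrm{Symb}_{(t_0,\tau)}(P_0)$ via the identity~\eqref{eq:relationship_symbols}, making the equivalence of the three conditions immediate since the criterion does not depend on $\lambda$. Your extra remark that $E^G_\lambda \neq 0$ (so that $c\cdot\mathrm{id}_{E^G_\lambda}$ is invertible iff $c \neq 0$) is a harmless pedantic addition that the paper leaves implicit.
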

\begin{proof}
  Ellipticity of $\widehat{P}_\lambda$ at $t_0 \in T$
  is characterized by injectivity of
  $\mathrm{Symb}_{(t_0, \tau)} (\widehat{P}_\lambda)$
  for every $\tau \in T_{t_0}^* T \setminus 0$,
  which by~\eqref{eq:relationship_symbols}
  is equivalent to
  \begin{equation*}
    \mathrm{Symb}_{(t_0, \tau)} (P_0) \neq 0,
    \quad \forall \tau \in T^*_{t_0} T \setminus 0
  \end{equation*}
  i.e.~to $P_0$ being elliptic at $t_0$.
\end{proof}
This motivates us
to introduce the following class
of operators on $T \times G$.
\begin{Def}
  \label{def:classT}
  We say that a LPDO $P$ on $T \times G$
  belongs to \emph{class~$\mathcal{T}$} if
  \begin{enumerate}
  \item $P$ is $G$-invariant,
  \item $P$ and $P_0$ in~\eqref{eq:Pdefinition} have the same order and
  \item $P_0$ is elliptic in $T$.
  \end{enumerate}
\end{Def}

\subsection{Examples}

\subsubsection{Vector fields}
\label{exa:invariant_vf}

Consider $\vv{Y}$ a vector field
on $T \times G$ that can be  written as
\begin{equation}
  \label{eq:invariant_vf}
  \vv{Y} = \vv{W} + \sum_{j = 1}^m a_j(t) \vv{X}_j
\end{equation}
where
$a_1, \ldots, a_m \in \cinfty(T)$
and $\vv{W} \in \gr{X}(T)$
is a complex vector field.
Notice that $\vv{W}$
has the same order of $\vv{Y}$
unless it vanishes identically.
Therefore,
$\vv{Y}$ belongs to $\mathcal{T}$
if and only if
$\vv{W}$ is elliptic on $T$.

When $\vv{W}$ is a real vector field
the latter condition forces $T$ to be one-dimensional
i.e.~$T = S^1$,
in which case $\vv{W}$ is a non-vanishing
multiple of $\del_t$,
and $\vv{Y}$ is equivalent to
a vector field of the form
\begin{equation*}
  \del_t + \sum_{j = 1}^m a_j(t) \vv{X}_j
  \quad \text{on $S^1 \times G$}.
\end{equation*}
The case $\dim T = 2$ is also possible
but in that case $\vv{W}$ must be complex
with $\Re \vv{W}$ and $\Im \vv{W}$
linearly independent everywhere;
if we additionally assume that these
real vector fields commute
then one can endow $T$ with the structure
of a Riemann surface such that $\vv{W}$
is essentially the Cauchy-Riemann operator
i.e.~$\vv{Y}$ is of the form
\begin{equation*}
  \del_{\bar{z}} + \sum_{j = 1}^m a_j(t) \vv{X}_j
  \quad \text{on $T \times G$}.
\end{equation*}
When $\dim T \geq 3$
no vector field $\vv{Y}$ on $T \times G$
will belong to $\mathcal{T}$
as in that case $\vv{W}$ can never be elliptic.

\subsubsection{Certain second-order operators}

Let $Q$ be a second-order operator on $T$ and
\begin{equation*}
  \vv{Y}_\ell \dfn \vv{W}_\ell + \sum_{j = 1}^m a_{\ell j}(t) \vv{X}_j,
  \quad \ell \in \{1, \ldots, N\},
\end{equation*}
be vector fields
of the form~\eqref{eq:invariant_vf}.
Then
\begin{equation*}
  P \dfn Q - \sum_{\ell = 1}^N \vv{Y}_\ell^2
\end{equation*}
is $G$-invariant,
and moreover belongs to $\mathcal{T}$
if and only if
\begin{equation*}
  Q - \sum_{\ell = 1}^N \vv{W}_\ell^2
\end{equation*}
is elliptic of order $2$ on $T$.
Real operators in this class were investigated
e.g.~when $Q = \Delta_T$~\cite{afr20, afr20b};
or when $Q = 0$ and $\vv{W}_1, \ldots, \vv{W}_N$ span
the tangent bundle of $T$ everywhere~\cite{brccj16}
(further references therein).

\section{From microlocal analysis to global Gevrey estimates}
\label{sec:microlocal}

\begin{Lem}
  \label{lem:micr_from_fio_gevrey}
  Let $P$ be a real-analytic LPDO on $T \times G$
  belonging to class~$\mathcal{T}$.
  If $u \in \D'(T \times G)$ is such that
  $Pu \in \gev^s(T \times G)$
  then for every $\phi \in \gev^s(G)$
  we have that
  $\tilde{u}(\phi) \dfn \langle u, \cdot \otimes \phi \rangle \in \gev^s(T)$.
\end{Lem}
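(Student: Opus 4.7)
The plan is to exploit the class-$\mathcal{T}$ hypothesis via microlocal Gevrey regularity, turning the information $Pu \in \gev^s(T \times G)$ into a constraint on the $\gev^s$-wavefront set of $u$ that renders the integration in the $G$-variable harmless. First I would compute the principal symbol of $P$ in local coordinates. Writing $P = \sum_{|\alpha| \leq r} P_\alpha \vv{X}^\alpha$ and using that $P$ and $P_0$ share order $r$ (hence $\mathrm{ord}(P_\alpha) \leq r - |\alpha|$), the principal symbol $\sigma_P(t_0, x_0, \tau_0, \xi_0)$ decomposes as a sum over those $\alpha$ with $|\alpha| + \mathrm{ord}(P_\alpha) = r$ of terms $\sigma_{P_\alpha}(t_0, \tau_0) \cdot \sigma_{\vv{X}^\alpha}(x_0, \xi_0)$. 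Since each $\sigma_{\vv{X}_j}(x, \xi)$ is linear and homogeneous in $\xi$, the factor $\sigma_{\vv{X}^\alpha}(x_0, \xi_0)$ vanishes at $\xi_0 = 0$ for $|\alpha| > 0$, leaving $\sigma_P(t_0, x_0, \tau_0, 0) = \sigma_{P_0}(t_0, \tau_0)$, which is nonzero for $\tau_0 \neq 0$ by ellipticity of $P_0$. Hence $P$ is microlocally elliptic at every covector $(t_0, x_0, \tau_0, 0)$ with $\tau_0 \neq 0$.

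Since $P$ is real-analytic, microlocal Gevrey elliptic regularity (classical for analytic $P$ when $s = 1$, and extended to every $s \geq 1$ e.g.\ in Rodino's monograph) then yields
\begin{equation*}
  \mathrm{WF}_{\gev^s}(u) \cap \{(t, x, \tau, \xi) : \xi = 0\} = \varnothing
\end{equation*}
as a subset of $T^*(T \times G) \setminus 0$; the exclusion of the zero section already disposes of the covectors with $\tau_0 = 0 = \xi_0$.

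Finally, I would view $\tilde{u}(\phi)$ as the pushforward $\pi_*\bigl((1_T \otimes \phi) u\bigr)$, where $\pi : T \times G \to T$ is the first projection (proper, as $G$ is compact). Multiplication by $1_T \otimes \phi \in \gev^s(T \times G)$ does not enlarge the $\gev^s$-wavefront set, and the pushforward formula along a proper fiber projection gives
\begin{equation*}
  \mathrm{WF}_{\gev^s}(\tilde{u}(\phi)) \subset \{(t, \tau) : \exists\, x \in G,\ (t, x, \tau, 0) \in \mathrm{WF}_{\gev^s}(u)\},
\end{equation*}
which is empty by the previous step. Hence $\tilde{u}(\phi)$ has empty $\gev^s$-wavefront set, and compactness of $T$ yields $\tilde{u}(\phi) \in \gev^s(T)$. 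The main obstacle is to invoke (or, if necessary, verify from scratch) the Gevrey analogues of the pushforward rule for wavefront sets through a proper projection and the preservation of $\gev^s$-wavefront sets under multiplication by Gevrey functions; once these microlocal tools are granted, the symbol computation of the first paragraph together with elliptic regularity delivers the conclusion immediately.
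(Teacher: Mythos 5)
Your proposal is correct in substance and starts exactly where the paper does: the identity $\mathrm{Symb}_{(t,\tau,x,0)}(P)=\mathrm{Symb}_{(t,\tau)}(P_0)$ together with ellipticity of $P_0$ shows that $\mathrm{Char}(P)$ contains no covector with $\xi=0$, and the inclusion $\mathrm{WF}_s(u)\subset\mathrm{Char}(P)$ (valid because $Pu\in\gev^s$ and $P$ has real-analytic coefficients) then clears the $\gev^s$-wavefront set of $u$ off the set $\{\xi=0\}$. Where you diverge is in converting this microlocal information into the statement about $\tilde u(\phi)$. The paper restricts $u$ to each fiber $\{t\}\times G$ --- a pullback to a submanifold whose conormal bundle is precisely $\{(t,\tau,x,0)\}$ and hence misses $\mathrm{WF}_s(u)$ --- and then quotes H\"ormander's Theorem~4.1 from the 1971 paper, which directly asserts that $t\mapsto\langle u_t,\phi\rangle$ is in $\gev^s(T)$. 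You instead write $\tilde u(\phi)=\pi_*\bigl((1_T\otimes\phi)u\bigr)$ and invoke the $\gev^s$-wavefront calculus for products and for pushforward along the proper projection $\pi:T\times G\to T$; since ${}^{\mathrm t}d\pi$ sends $\tau$ to $(\tau,0)$, the estimate you state for $\mathrm{WF}_{\gev^s}(\pi_*\cdot)$ gives an empty set, and compactness of $T$ finishes. The two routes are dual (restriction to fibers versus integration along fibers) and hinge on the same geometric fact. What the paper's route buys is that both required microlocal tools are single citable theorems in one reference covering all $s\geq1$; your route is more modular and standard-looking, but, as you yourself flag, the Gevrey ($s>1$) versions of the product rule and of the proper-pushforward rule for $\mathrm{WF}_{\gev^s}$ are less conveniently available in the literature and would need to be carefully sourced or proved (e.g.\ via an FBI-transform characterization) before the argument is complete. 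With those two lemmas granted, your proof is sound.
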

\begin{proof}
  Fix $t \in T$
  and notice that
  $\mathrm{WF}_s(u)$ does
  intercept the conormal bundle
  of $\{ t \} \times G$.
  Indeed,
  a covector
  $(t, \tau, x, \xi) \in T^* (T \times G)$
  annihilates $T_{(t,x)} (\{ t \} \times G)$
  if and only if $\xi = 0$,
  and none of these belongs to
  the characteristic set of $P$:
  as one can easily compute,
  \begin{equation*}
    \mathrm{Symb}_{(t, \tau, x, 0)} (P) = \mathrm{Symb}_{(t, \tau)} (P_0)
  \end{equation*}
  and $P_0$
  is elliptic in $T$ by assumption.
  Our claim follows since
  $\mathrm{WF}_s(u) \sset \mathrm{Char}(P)$~\cite[Theorem~5.1]{hormander71b}.
  We are then allowed to
  restrict $u$ to $\{t \} \times G$,
  i.e.~to pull it back 
  via the map
  $x \in G \mapsto (t,x) \in T \times G$,
  yielding in this way
  a distribution $u_t \in \D'(G)$,
  and by~\cite[Theorem~4.1]{hormander71b}
  the function
  $t \in T \mapsto \langle u_t, \phi \rangle \in \C$
  belongs to $\gev^s(T)$
  whatever $\phi \in \gev^s(G)$.
  That function is,
  however,
  none other than
  the distribution
  $\psi \in \cinfty(T) \mapsto \langle u, \psi \otimes \phi \rangle \in \C$.
\end{proof}

\begin{Cor}
  \label{cor:ulambdagevrey}
  Under the hypotheses of Lemma~\ref{lem:micr_from_fio_gevrey}
  we have $\mathcal{F}^G_\lambda (u) \in \gev^s(T; E_\lambda^G)$
  for every $\lambda \in \sigma(\Delta_G)$.
\end{Cor}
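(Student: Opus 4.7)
The plan is to reduce the claim to a direct application of Lemma~\ref{lem:micr_from_fio_gevrey} using the explicit description of $\mathcal{F}^G_\lambda(u)$ in terms of the chosen orthonormal basis of $E_\lambda^G$ introduced in Section~\ref{sec:prelim}.

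First, I would recall that $E_\lambda^G$ is finite-dimensional, so a section $t \mapsto w(t) \in E_\lambda^G$ belongs to $\gev^s(T; E_\lambda^G)$ if and only if each of its coordinates with respect to a fixed basis belongs to $\gev^s(T)$. Writing
\begin{equation*}
  \mathcal{F}^G_\lambda(u) = \sum_{j=1}^{d_\lambda^G} \mathcal{F}^G_\lambda(u)_j \otimes \phi_j^\lambda,
\end{equation*}
it therefore suffices to prove that $\mathcal{F}^G_\lambda(u)_j \in \gev^s(T)$ for each $j \in \{1,\ldots,d_\lambda^G\}$.

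Next, I would unwind the definition of $\mathcal{F}^G_\lambda(u)_j$ given in the preliminaries: for every $\psi \in \cinfty(T)$ one has
\begin{equation*}
  \langle \mathcal{F}^G_\lambda(u)_j, \psi \rangle = \langle u, \psi \otimes \overline{\phi_j^\lambda} \rangle = \langle \tilde{u}(\overline{\phi_j^\lambda}), \psi \rangle,
\end{equation*}
so that $\mathcal{F}^G_\lambda(u)_j = \tilde{u}(\overline{\phi_j^\lambda})$ in $\D'(T)$, with $\tilde{u}$ in the notation of Lemma~\ref{lem:micr_from_fio_gevrey}.

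Finally, since $\phi_j^\lambda \in E_\lambda^G \subset \gev^1(G) \subset \gev^s(G)$ (eigenfunctions of the real-analytic elliptic operator $\Delta_G$ are automatically real-analytic, and $\gev^s$ is closed under complex conjugation), we have $\overline{\phi_j^\lambda} \in \gev^s(G)$. Applying Lemma~\ref{lem:micr_from_fio_gevrey} with this choice of $\phi$ yields $\tilde{u}(\overline{\phi_j^\lambda}) \in \gev^s(T)$, which finishes the proof. There is no real obstacle here: all the work has already been absorbed into the preceding lemma, and this corollary is essentially a bookkeeping step that packages the scalar conclusion into the vector-valued statement needed later.
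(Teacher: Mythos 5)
Your proposal is correct and is exactly the paper's (one-line) argument, merely written out in full: the paper also just applies Lemma~\ref{lem:micr_from_fio_gevrey} to a basis of $E_\lambda^G$, and your identification $\mathcal{F}^G_\lambda(u)_j = \tilde{u}(\overline{\phi_j^\lambda})$ together with the observation that $\overline{\phi_j^\lambda} \in \gev^1(G) \subset \gev^s(G)$ is precisely the bookkeeping that makes that one line work.
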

\begin{proof}
  Apply Lemma~\ref{lem:micr_from_fio_gevrey}
  to a basis of $E_\lambda^G$.
\end{proof}

For the next result,
given $\theta \in (0, 1)$ we define the set
\begin{equation*}
  \Gamma_\theta \dfn \{ (\mu, \lambda) \in \sigma(\Delta_T) \times \sigma(\Delta_G) \st \lambda \leq \theta \mu \}. 
\end{equation*}
\begin{Lem}
  \label{lem:global_ineq_microl_hormander_gevrey}
  Suppose that $u \in \D'(T \times G)$ is such that
  $\tilde{u}(\phi) \in \gev^s(T)$
  for every $\phi \in \gev^s(G)$.
  Then there exist $C, h > 0$
  and $\theta \in (0, 1)$
  such that
  \begin{equation}
    \label{eq:gevrey_cone_estimate}
    \| \mathcal{F}^T_\mu \mathcal{F}^G_\lambda (u) \|_{L^2(T \times G)} \leq C e^{-h (1 + \mu + \lambda)^{\frac{1}{2s}}},
    \quad \forall (\mu, \lambda) \in \Gamma_\theta. 
  \end{equation}
\end{Lem}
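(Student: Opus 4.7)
The plan is to promote the pointwise hypothesis (each $\tilde u(\phi) \in \gev^s(T)$) into a quantitative continuity estimate via the closed graph theorem, and then to extract the cone estimate by testing against the basis $\{\phi_j^\lambda\}_j$. The map $\phi \mapsto \tilde u(\phi)$ is continuous from $\cinfty(G)$ to $\D'(T)$ because $u \in \D'(T \times G)$, and by the hypothesis it restricts to a map $\gev^s(G) \to \gev^s(T)$ between DFS spaces (both continuously embedded in their ambient test-function/distribution spaces). The graph of this restriction is therefore closed, the closed graph theorem for DFS spaces forces continuity, and, since bounded sets in a DFS space sit in a single step, there exist $h, h', C > 0$ such that, in the adapted norms~\eqref{eq:tildeGevreynorm},
\begin{equation*}
  \|\tilde u(\phi)\|_{\tilde{\gev}^{s,h'}(T)} \leq C \|\phi\|_{\tilde{\gev}^{s,h}(G)}, \quad \forall \phi \in \tilde{\gev}^{s,h}(G).
\end{equation*}

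I would then test this against $\phi = \overline{\phi_j^\lambda}$: this element lies in $E_\lambda^G$ and has unit $L^2$-norm, so the right-hand side equals $C e^{h(1+\lambda)^{1/(2s)}}$. Using the identification $\mathcal F_\lambda^G(u)_j = \tilde u(\overline{\phi_j^\lambda})$ and unpacking the tilde-norm on the left via~\eqref{eq:tildeGevreynorm}, the summand at a single $\mu \in \sigma(\Delta_T)$ already yields
\begin{equation*}
  e^{h'(1+\mu)^{1/(2s)}} \|\mathcal F_\mu^T(\mathcal F_\lambda^G(u)_j)\|_{L^2(T)} \leq C e^{h(1+\lambda)^{1/(2s)}}.
\end{equation*}
Squaring and summing over $j$ via the Pythagorean decomposition $\|\mathcal F_\mu^T \mathcal F_\lambda^G(u)\|_{L^2(T \times G)}^2 = \sum_j \|\mathcal F_\mu^T(\mathcal F_\lambda^G(u)_j)\|_{L^2(T)}^2$ introduces a factor $\sqrt{d_\lambda^G}$; Weyl's law ensures this grows only polynomially in $\lambda$, so it is absorbed at the cost of enlarging $h$ by an arbitrarily small amount.

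Finally, the choice of the cone parameter $\theta \in (0,1)$. On $\Gamma_\theta$ the subadditivity of $x \mapsto x^{1/(2s)}$ (valid since $s \geq 1$) gives $(1+\lambda)^{1/(2s)} \leq \theta^{1/(2s)}(1+\mu)^{1/(2s)} + (1-\theta)^{1/(2s)}$, while $(1+\mu+\lambda)^{1/(2s)} \leq (1+\theta)^{1/(2s)}(1+\mu)^{1/(2s)}$. Picking $\theta$ small enough that $h\theta^{1/(2s)} < h'$ makes the coefficient of $(1+\mu)^{1/(2s)}$ in $h(1+\lambda)^{1/(2s)} - h'(1+\mu)^{1/(2s)}$ strictly negative and in fact bounded above by $-h''(1+\mu+\lambda)^{1/(2s)}$ for some $h''>0$; this yields the claimed bound for all but finitely many pairs in $\Gamma_\theta$, and the finitely many remaining pairs are absorbed into the constant. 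The main obstacle, in my view, is the coordination of $\theta$ with the pair $(h,h')$ produced by the closed graph step: $\theta$ must be chosen \emph{after} $h'$ is known, and one must be vigilant about the additive constants arising from the non-homogeneity of $x \mapsto (1+x)^{1/(2s)}$.
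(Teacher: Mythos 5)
Your proof is correct and follows essentially the same route as the paper's: a closed-graph/factorization argument yielding a quantitative bound between Banach steps $\tilde{\gev}^{s,h}(G) \to \tilde{\gev}^{s,h'}(T)$, testing against the orthonormal eigenbasis $\overline{\phi^\lambda_j}$, Weyl's law to absorb $d^G_\lambda$, and the cone geometry to convert $h(1+\lambda)^{\frac{1}{2s}} - h'(1+\mu)^{\frac{1}{2s}}$ into $-h''(1+\mu+\lambda)^{\frac{1}{2s}}$. The one place you diverge is exactly the point you flag as delicate: the paper fixes the source step $\tilde{\gev}^{s,1}(G)$, asserts the target parameter may be taken $>2$, and then lets $\theta\in(0,1)$ be arbitrary, whereas you keep $h,h'$ as handed to you by the factorization and only then shrink $\theta$ so that $h\,\theta^{\frac{1}{2s}} < h'$; since in this scale the steps $\tilde{\gev}^{s,h'}$ shrink as $h'$ grows, one cannot in general enlarge the target parameter at will, so your ordering (choose $\theta$ last) is the safe one.
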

\begin{proof}
  We employ the DFS space
  characterization
  of the Gevrey spaces as
  in~\eqref{eq:tildeGevreynorm}-\eqref{eq:tildeGevreyspace}.
  By hypothesis we have
  $\tilde{u}(\tilde{\gev}^{s,1}(G)) \sset \gev^s(T)$
  hence the induced linear map
  $\tilde{u}: \tilde{\gev}^{s,1}(G) \to \gev^s(T)$
  is continuous by
  De Wilde's Closed Graph Theorem~\cite[p.~57]{kothe_tvs2}
  (whose applicability is granted by the fact
  that Banach spaces are ultrabornological
  and DFS spaces are webbed):
  indeed, notice that its graph is closed
  thanks to the continuity of
  $\tilde{u}: \cinfty(G) \to \D'(T)$.
  As such, $\tilde{u}$ maps
  bounded sets in $\tilde{\gev}^{s,1}(G)$ to
  bounded sets in $\gev^{s}(T)$
  so by~\cite[Lemma~3]{komatsu67} we may assert
  the existence of an $h > 0$ such that
  \begin{equation*}
    \tilde{u} ( \{ \phi \in \tilde{\gev}^{s,1}(G) \st \| \phi \|_{\tilde{\gev}^{s,1}(G)} \leq 1 \}) \sset \tilde{\gev}^{s,h}(T).
  \end{equation*}
  Of course, we may assume that $h > 2$.
  By linearity,
  $\tilde{u}(\tilde{\gev}^{s,1}(G)) \sset \tilde{\gev}^{s,h}(T)$
  and again
  $\tilde{u}: \tilde{\gev}^{s,1}(G) \to \tilde{\gev}^{s,h}(T)$
  is continuous by the Closed Graph Theorem
  (the classical one).
  Therefore,
  there exists a constant
  $C > 0$ such that
  \begin{equation*}
    \| \tilde{u}(\phi) \|_{\tilde{\gev}^{s,h}(T)} \leq C \| \phi \|_{\tilde{\gev}^{s,1}(G)},
    \quad \forall \phi \in \tilde{\gev}^{s,1}(G).
  \end{equation*}

  When we take
  $\phi = \overline{\phi^\lambda_j}$
  -- an element of our orthonormal basis of $E^G_\lambda$ --
  we obtain
  \begin{equation*}
    \| \tilde{u}(\overline{\phi^\lambda_j}) \|_{\tilde{\gev}^{s,h}(T)}^2
    = \sum_{\mu \in \sigma(\Delta_T)} e^{2h(1 + \mu)^{\frac{1}{2s}}}  \| \mathcal{F}^T_\mu [\tilde{u}(\overline{\phi^\lambda_j})] \|_{L^2(T)}^2 
    = \sum_{\mu \in \sigma(\Delta_T)} e^{2h(1 + \mu)^{\frac{1}{2s}}} \sum_{i = 1}^{d^T_\mu} | \langle u, \overline{\psi^\mu_i \otimes \phi^\lambda_j} \rangle |^2
  \end{equation*}
  hence
  \begin{equation*}
    \sum_{j = 1}^{d^G_\lambda} \| \tilde{u}(\overline{\phi^\lambda_j}) \|_{\tilde{\gev}^{s,h}(T)}^2
    = \sum_{\mu \in \sigma(\Delta_T)} e^{2h(1 + \mu)^{\frac{1}{2s}}} \sum_{i = 1}^{d^T_\mu} \sum_{j = 1}^{d^G_\lambda} | \langle u, \overline{\psi^\mu_i \otimes \phi^\lambda_j} \rangle |^2
    = \sum_{\mu \in \sigma(\Delta_T)} e^{2h(1 + \mu)^{\frac{1}{2s}}} \| \mathcal{F}^T_\mu \mathcal{F}^G_\lambda (u) \|_{L^2(T \times G)}^2
  \end{equation*}
  from which we conclude that
  \begin{equation*}
    e^{2h(1 + \mu)^{\frac{1}{2s}}} \| \mathcal{F}^T_\mu \mathcal{F}^G_\lambda (u) \|_{L^2(T \times G)}^2
    \leq \sum_{j = 1}^{d^G_\lambda} \| \tilde{u}(\overline{\phi^\lambda_j}) \|_{\tilde{\gev}^{s,h}(T)}^2
    \leq d^G_\lambda C^2 e^{2(1 + \lambda)^{\frac{1}{2s}}}
  \end{equation*}
 where we used that
  \begin{equation*}
    \| \overline{\phi^\lambda_j} \|_{\tilde{\gev}^{s, 1}(G)} = e^{(1 + \lambda)^{\frac{1}{2s}}}.
  \end{equation*}
  By Weyl's asymptotic formula $d_{\lambda}^{G}= O(\lambda^{m/2})$
  we have, enlarging $C$ if necessary,
  \begin{equation*}
    e^{2h(1 + \mu)^{\frac{1}{2s}}} \| \mathcal{F}^T_\mu \mathcal{F}^G_\lambda (u) \|^{2}_{L^2(T \times G)} \leq  C^{2} e^{4(1 + \lambda)^{\frac{1}{2s}}},
    \quad \forall (\mu, \lambda) \in \sigma(\Delta_T) \times \sigma(\Delta_G).
  \end{equation*}

  For $(\mu, \lambda) \in \Gamma_\theta$ we then have
  \begin{equation*}
    1 + \lambda \leq 1 + \theta \mu  \leq 1 + \mu
  \end{equation*}
  so
  \begin{equation*}
    \| \mathcal{F}^T_\mu \mathcal{F}^G_\lambda (u) \|_{L^2(T \times G)}
    \leq  C e^{2(1 + \lambda)^{\frac{1}{2s}} - h(1 + \mu)^{\frac{1}{2s}}}
    \leq  C e^{(2 - h)(1 + \mu)^{\frac{1}{2s}}}
  \end{equation*}
  but also
  \begin{equation*}
    1 + \mu + \lambda \leq 1 + \mu + \theta \mu \leq 2 (1 + \mu)
  \end{equation*}
  hence
  \begin{equation*}
    \| \mathcal{F}^T_\mu \mathcal{F}^G_\lambda (u) \|_{L^2(T \times G)} \leq C e^{-h'(1 + \mu + \lambda)^{\frac{1}{2s}}}
  \end{equation*}
  where $h' \dfn (h-2) / (2^{\frac{1}{2s}})$.
\end{proof}

\begin{Prop}
  \label{prop:partial_smoothness_converse}
  If $u \in \D'(T \times G)$ is such that
  \begin{enumerate}
  \item there exist $C, h > 0$ and $\theta \in (0,1)$
    such that~\eqref{eq:gevrey_cone_estimate} holds and
  \item there exist $C', h' > 0$ such that
    \begin{equation}
      \label{eq:gevrey_vector_exp}
      \| \mathcal{F}_\lambda^G (u) \|_{L^2(T \times G)} \leq C' e^{-h' (1 + \lambda)^{\frac{1}{2s}}},
      \quad \forall \lambda \in \sigma(\Delta_G)
    \end{equation}
  \end{enumerate}
  then $u \in \gev^s(T \times G)$.
\end{Prop}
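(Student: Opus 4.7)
The plan is to exploit the full Fourier decomposition relative to $\Delta_M = \Delta_T + \Delta_G$ combined with the partial-into-total relation
\begin{equation*}
  \mathcal{F}^M_\alpha(u) = \sum_{\substack{\mu \in \sigma(\Delta_T),\ \lambda \in \sigma(\Delta_G) \\ \mu + \lambda = \alpha}} \mathcal{F}^T_\mu \mathcal{F}^G_\lambda (u),
  \quad \alpha \in \sigma(\Delta_M),
\end{equation*}
and show directly that $\| \mathcal{F}^M_\alpha(u) \|_{L^2(M)}$ decays like $\exp(-h''(1+\alpha)^{1/2s})$ for suitable $h'' > 0$, which is precisely the Gevrey characterization recorded earlier in the preliminaries.

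First I would split, for each fixed $\alpha$, the pairs $(\mu,\lambda)$ with $\mu+\lambda=\alpha$ into two regimes: those in the cone $\Gamma_\theta$ (i.e.\ $\lambda \leq \theta \mu$) and those in its complement (i.e.\ $\lambda > \theta \mu$). On the cone part, hypothesis (1) gives the decay immediately, since $\mu + \lambda = \alpha$, so
\begin{equation*}
  \| \mathcal{F}^T_\mu \mathcal{F}^G_\lambda(u) \|_{L^2(M)} \leq C e^{-h(1+\alpha)^{1/2s}}.
\end{equation*}
On the complementary part, the inequality $\lambda > \theta \mu$ combined with $\mu + \lambda = \alpha$ forces $\lambda > \frac{\theta}{1+\theta}\alpha$, so $1+\lambda \geq c_\theta (1+\alpha)$ for a positive constant $c_\theta$ depending only on $\theta$. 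Using the trivial Pythagorean bound $\| \mathcal{F}^T_\mu \mathcal{F}^G_\lambda(u) \|_{L^2(M)} \leq \| \mathcal{F}^G_\lambda(u) \|_{L^2(M)}$ and then hypothesis (2), we get
\begin{equation*}
  \| \mathcal{F}^T_\mu \mathcal{F}^G_\lambda(u) \|_{L^2(M)} \leq C' e^{-h'(1+\lambda)^{1/2s}} \leq C' e^{-h' c_\theta^{1/2s} (1+\alpha)^{1/2s}}.
\end{equation*}
Taking $h'' := \min\{h,\, h' c_\theta^{1/2s}\}$, every term in the sum is bounded by a constant times $e^{-h''(1+\alpha)^{1/2s}}$ uniformly in $(\mu,\lambda)$.

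To finish, I would bound the number $N(\alpha)$ of pairs $(\mu,\lambda) \in \sigma(\Delta_T) \times \sigma(\Delta_G)$ with $\mu + \lambda = \alpha$: by Weyl's asymptotics on $T$ and $G$, both $|\sigma(\Delta_T) \cap [0,\alpha]|$ and $|\sigma(\Delta_G) \cap [0,\alpha]|$ are polynomially bounded in $\alpha$, so $N(\alpha) \leq A(1+\alpha)^N$ for some $A, N > 0$. Summing over the at most $N(\alpha)$ terms yields
\begin{equation*}
  \| \mathcal{F}^M_\alpha(u) \|_{L^2(M)} \leq A(1+\alpha)^N (C+C') e^{-h''(1+\alpha)^{1/2s}},
\end{equation*}
and the polynomial prefactor can be absorbed at the cost of shrinking $h''$ slightly, producing exponential decay of order $1/(2s)$, hence $u \in \gev^s(T \times G)$.

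The substantive step is the cone/complement splitting together with the elementary fact that outside the cone one has $\lambda \gtrsim \alpha$ — everything else is book-keeping. No real obstacle is expected; the only point that requires mild care is ensuring that $\| \mathcal{F}^T_\mu \mathcal{F}^G_\lambda(u) \|_{L^2(M)} \leq \| \mathcal{F}^G_\lambda(u) \|_{L^2(M)}$, which follows from the orthogonality of the $T$-Fourier decomposition applied within each fiber of the trivial bundle $T \times E_\lambda^G$.
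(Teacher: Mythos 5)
Your proposal is correct and follows essentially the same route as the paper: split $\sigma(\Delta_T)\times\sigma(\Delta_G)$ into the cone $\Gamma_\theta$ and its complement, use hypothesis (1) on the cone and, off the cone, the elementary inequality $1+\lambda\gtrsim_\theta 1+\mu+\lambda$ together with $\|\mathcal{F}^T_\mu\mathcal{F}^G_\lambda(u)\|\leq\|\mathcal{F}^G_\lambda(u)\|$ and hypothesis (2), yielding a uniform bound $Ce^{-h''(1+\mu+\lambda)^{1/2s}}$. The only difference is that you spell out the final bookkeeping (passing from the partial projections to $\mathcal{F}^M_\alpha(u)$ via Weyl's polynomial count and absorbing the prefactor), which the paper leaves implicit.
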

\begin{proof}
  For $(\mu, \lambda) \in \sigma(\Delta_T) \times \sigma(\Delta_G)$
  not in $\Gamma_\theta$ we have
  \begin{align*}
    1 + \mu + \lambda < 1 + \frac{\lambda}{\theta} + \lambda \leq \frac{2}{\theta}(1 + \lambda)
  \end{align*}
  since $1/\theta > 1$, hence
  \begin{equation*}
    (1 + \lambda)^{\frac{1}{2s}} \geq  \left(\frac{\theta}{2} \right)^{\frac{1}{2s}} (1 + \mu + \lambda)^{\frac{1}{2s}}.
  \end{equation*}
  We conclude that
  \begin{equation*}
    \| \mathcal{F}^T_\mu \mathcal{F}^G_\lambda (u) \|_{L^2(T \times G)} \leq
    \begin{cases}
      C' e^{-h' (\theta/2)^{\frac{1}{2s}} (1 + \mu + \lambda)^{\frac{1}{2s}}}, &\text{in $\Gamma^c_\theta$} \\ 
      C e^{-h (1 + \mu + \lambda)^{\frac{1}{2s}}}, &\text{in $\Gamma_\theta$}
    \end{cases}
  \end{equation*}    
  and therefore
  $u \in \gev^s(T \times G)$.
\end{proof}

\section{Gevrey vectors for the partial Laplacian}
\label{sec:gevrey_vectors}

\begin{Prop}
  Let $P$ be a real-analytic LPDO on $T \times G$
  that is globally hypoelliptic and 
  commutes with $\Delta_G$.
  If $u \in \cinfty(T \times G)$ is such that
  $Pu \in \gev^s(T \times G)$ then
  $u$ is a \emph{$s$-Gevrey vector} for $\Delta_G$,
  that is,
  there exist $C, h > 0$ such that
  \begin{equation}
    \label{eq:gevrey_vector}
    \| \Delta_G^k u \|_{L^2(T \times G)} \leq C h^k k!^{2s}
  \end{equation}
  for every $k \in \Z_+$.
\end{Prop}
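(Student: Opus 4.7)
My plan is to recast the Gevrey-vector condition as an exponential decay of the partial Fourier coefficients of $u$, and then to obtain such decay from the Gevrey regularity of $Pu$ via a quantitative form of global hypoellipticity, applied separately on each $\Delta_G$-eigenspace. Parseval's identity $\|\Delta_G^k u\|_{L^2}^2 = \sum_\lambda \lambda^{2k}\|\mathcal{F}_\lambda^G(u)\|_{L^2(T\times G)}^2$ combined with the elementary minimization $\inf_k h^k k!^{2s}/\lambda^k \sim e^{-c\lambda^{1/(2s)}}$ (the same trick that underlies the $\tilde{\gev}^{s,h}$-characterization~\eqref{eq:tildeGevreynorm}) makes~\eqref{eq:gevrey_vector} equivalent to the existence of $C', h' > 0$ with $\|\mathcal{F}_\lambda^G(u)\|_{L^2(T\times G)} \leq C' e^{-h'(1+\lambda)^{1/(2s)}}$ for all $\lambda \in \sigma(\Delta_G)$, and that is what I would aim to prove.

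Set $f \dfn Pu \in \gev^s(T\times G)$. Commutativity $[P,\Delta_G]=0$ gives $\widehat{P}_\lambda\mathcal{F}_\lambda^G(u) = \mathcal{F}_\lambda^G(f)$. From $f \in \gev^s$ one has $\|\mathcal{F}_\mu^T\mathcal{F}_\lambda^G(f)\|_{L^2} \leq Ce^{-h(1+\mu+\lambda)^{1/(2s)}}$ for appropriate $C, h > 0$; combining this with the split
\begin{equation*}
  e^{-h(1+\mu+\lambda)^{1/(2s)}} \leq e^{-(h/2)(1+\mu)^{1/(2s)}}\, e^{-(h/2)(1+\lambda)^{1/(2s)}}
\end{equation*}
and summing over $\mu$ against a polynomial Sobolev weight yields, for any prescribed $s^* \geq 0$, constants $C^*, h^* > 0$ with
\begin{equation*}
  \|\mathcal{F}_\lambda^G(f)\|_{H^{s^*}(T\times G)} \leq C^{*} e^{-h^{*}(1+\lambda)^{1/(2s)}}, \quad \forall \lambda \in \sigma(\Delta_G).
\end{equation*}

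Separately, global hypoellipticity of $P$ combined with a closed-graph argument (applied to the continuous Fréchet-to-Fréchet inclusion $\{u \in H^{-\ell}(T\times G) : Pu \in \cinfty(T\times G)\} \hookrightarrow \cinfty(T\times G)$) produces, for each $\ell > 0$, some $s^* \geq 0$ and $C > 0$ such that
\begin{equation*}
  \|v\|_{L^2(T\times G)} \leq C\bigl(\|Pv\|_{H^{s^*}(T\times G)} + \|v\|_{H^{-\ell}(T\times G)}\bigr), \quad \forall v \in \cinfty(T\times G).
\end{equation*}
Feeding $v = \mathcal{F}_\lambda^G(u)$ into this and using $\|\mathcal{F}_\lambda^G(u)\|_{H^{-\ell}(T\times G)} \leq (1+\lambda)^{-\ell/2}\|\mathcal{F}_\lambda^G(u)\|_{L^2(T\times G)}$ (a direct spectral computation, because $\mathcal{F}_\lambda^G(u)$ lies in the $\lambda$-eigenspace of $\Delta_G$), for every $\lambda$ large enough that $C(1+\lambda)^{-\ell/2} \leq 1/2$ I can absorb the last term into the left to get $\|\mathcal{F}_\lambda^G(u)\|_{L^2} \leq 2C\|\mathcal{F}_\lambda^G(f)\|_{H^{s^*}}\leq 2CC^* e^{-h^*(1+\lambda)^{1/(2s)}}$; the finitely many remaining values of $\lambda$ are handled by enlarging constants. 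The main obstacle is this closed-graph step, which must extract a Sobolev estimate with a strictly negative exponent $-\ell$ (so that absorption is available) from the purely qualitative hypothesis of global hypoellipticity; a possibly infinite-dimensional $\ker P$ presents no obstruction since it is harmlessly absorbed into the $H^{-\ell}$-remainder.
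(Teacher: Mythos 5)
Your proposal is correct, but it organizes the argument differently from the paper. Both proofs hinge on the same two ingredients: the subelliptic-type a priori inequality $\| v \|_{L^2} \leq C ( \| Pv \|_{\sob^{t}} + \| v \|_{\sob^{-\ell}} )$ extracted from global hypoellipticity by a closed graph argument (the paper cites precisely such a lemma, with the $\sob^{-1}$ remainder, so your ``main obstacle'' is available off the shelf; your re-derivation for general $\ell$ is the standard one and is sound), and the commutation $[P, \Delta_G] = 0$. The paper applies the inequality to $v = \Delta_G^k u$, bounds $\| \Delta_G^k P u \|_{\sob^t}$ by $B A^{\ell_0+k+1}(\ell_0+k)!^{2s}$ using the Gevrey regularity of $Pu$, observes $\| \Delta_G^{k+1} u \|_{\sob^{-1}} \leq \| \Delta_G^k u \|_{L^2}$, and closes an induction on $k$ with careful factorial bookkeeping. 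You instead apply the inequality to $v = \mathcal{F}^G_\lambda(u)$, exploit that on the $\lambda$-eigenspace the remainder carries the factor $(1+\lambda)^{-\ell/2}$, and absorb it for all but finitely many $\lambda$, obtaining directly the exponential decay $\| \mathcal{F}^G_\lambda(u) \|_{L^2} \lesssim e^{-h'(1+\lambda)^{1/(2s)}}$, which you correctly note is equivalent to the $s$-Gevrey-vector estimate~\eqref{eq:gevrey_vector} (the paper itself records one direction of this equivalence immediately after the proposition, and the converse is the standard $\sup_x x^{2k} e^{-hx^{1/(2s)}} \lesssim \tilde h^{2k} k!^{4s}$ computation). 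What your route buys is the elimination of the induction and of the choice of auxiliary constants in~\eqref{eq:assumptions_Ch}: the only quantitative input is the single absorption threshold $C(1+\lambda)^{-\ell/2} \leq 1/2$, and the finitely many exceptional eigenvalues are handled by enlarging constants. What the paper's route buys is that it never needs to invert or absorb anything on individual eigenspaces, which makes it slightly more robust in settings where the spectral projections are less convenient. Two small points to make explicit in a final write-up: $P\mathcal{F}^G_\lambda(u) = \mathcal{F}^G_\lambda(Pu)$ because $P$ commutes with the spectral projections of $\Delta_G$, and the splitting $(1+\mu+\lambda)^{1/(2s)} \geq \tfrac12\bigl((1+\mu)^{1/(2s)} + (1+\lambda)^{1/(2s)}\bigr)$ used for the $\sob^{s^*}$-decay of $\mathcal{F}^G_\lambda(f)$ holds since $1+\mu+\lambda \geq \max(1+\mu, 1+\lambda)$.
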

\begin{proof}
  We follow an approach similar to that of~\cite{brccj16},
  for which we start with some preliminary remarks.
  Let $u \in \cinfty(T \times G)$ be such that
  $Pu \in \gev^s(T \times G)$.
  Since $P$ is globally hypoelliptic,
  by standard functional analytic arguments
  (see e.g.~\cite[Lemma~3.1]{bmz12})
  there exists $t \in \R$ and $C_1 > 0$ such that
  \begin{equation*}
    \| \Delta_G^{k} u \|_{L^2(T \times G)}
    \leq C_1 ( \| P \Delta_G^{k} u \|_{\sob^t(T \times G)} + \| \Delta_G^{k} u \|_{\sob^{-1}(T \times G)}),
    \quad \forall k \in \Z_+.
  \end{equation*}
  We make use of
  the Sobolev norms~\eqref{eq:adaptedSobolev}.
  Then the last term
  in the inequality above
  can be estimated as follows:
  \begin{align*}
    \| \Delta_G^{k + 1} u \|_{\sob^{-1}(T \times G)}^2
    = \sum_{\substack{\mu \in \sigma(\Delta_T) \\ \lambda \in \sigma(\Delta_G)}}
    \frac{\lambda^2}{(1 + \mu + \lambda)^2} \| \mathcal{F}_\mu^T \mathcal{F}_\lambda^G (\Delta_G^{k}u) \|_{L^2(T \times G)}^2 
    \leq \| \Delta_G^{k} u \|_{L^2(T \times G)}^2,
    \quad \forall k \in \Z_+.
  \end{align*}
  Moreover, as $P$ commutes with $\Delta_G$ we have
  \begin{equation*}
    \| P \Delta_G^k u \|_{\sob^t(T \times G)}^2
    = \| \Delta_G^k P u \|_{\sob^t(T \times G)}^2
    = \sum_{\substack{\mu \in \sigma(\Delta_T) \\ \lambda \in \sigma(\Delta_G)}}
    (1 + \mu + \lambda)^{2t} \lambda^{2k} \| \mathcal{F}_\mu^T \mathcal{F}_\lambda^G (Pu) \|_{L^2(T \times G)}^2
  \end{equation*}
  and since $Pu \in \gev^s(T \times G)$
  there exists $A > 0$
  such that
  \begin{equation*}
    \| \mathcal{F}_\mu^T \mathcal{F}_\lambda^G (Pu) \|_{L^2(T \times G)}
    \leq A^{\ell + 1} \ell!^{2s} (1 + \mu + \lambda)^{-\ell},
    \quad \forall \ell \in \Z_+, \ \mu \in \sigma(\Delta_T), \ \lambda \in \sigma(\Delta_G)
  \end{equation*}
  hence
  \begin{equation}
    \label{eq:estimatell}
    \| P \Delta_G^k u \|_{\sob^t(T \times G)}^2
    \leq (A^{\ell + 1} \ell!^{2s})^2
    \sum_{\substack{\mu \in \sigma(\Delta_T) \\ \lambda \in \sigma(\Delta_G)}}
    (1 + \mu + \lambda)^{2t - 2 \ell} \lambda^{2k}.
  \end{equation}

  We choose $\ell_0 \in \Z_+$ such that
  \begin{equation*}
    B \dfn
   \bigg( \sum_{\substack{\mu \in \sigma(\Delta_T) \\ \lambda \in \sigma(\Delta_G)}}
    (1 + \mu + \lambda)^{2t - 2 \ell_0}\bigg)^{1/2}
    < \infty
  \end{equation*}
  which always exists thanks to
  Weyl's asymptotic formula,
  so plugging $\ell = \ell_0 + k$
  into~\eqref{eq:estimatell} yields:
  \begin{align*}
    \| P \Delta_G^k u \|_{\sob^t(T \times G)}^2
    &\leq (A^{\ell_0 + k + 1} (\ell_0 + k)!^{2s})^2
    \sum_{\substack{\mu \in \sigma(\Delta_T) \\ \lambda \in \sigma(\Delta_G)}}
    (1 + \mu + \lambda)^{2t - 2 \ell_0} \frac{\lambda^{2k}}{(1 + \mu + \lambda)^{2k}} \\
    &\leq B^2 (A^{\ell_0 + k + 1} (\ell_0 + k)!^{2s})^2
  \end{align*}
  and in particular we have that
  \begin{align}
    \label{eq:last_before_induction}
    \| \Delta_G^{k + 1} u \|_{L^2(T \times G)}
    &\leq C_1 (B A^{\ell_0 + k + 2} (\ell_0 + k + 1)!^{2s} + \| \Delta_G^k u \|_{L^2(T \times G)}) \nonumber \\
    &\leq C_1 (B A^{\ell_0 + k + 2} 4^{s(\ell_0 + k + 1)} \ell_0!^{2s} (k + 1)!^{2s} + \| \Delta_G^k u \|_{L^2(T \times G)})
  \end{align}
  for every $k \in \Z_+$.
  
  We are in position to assume
  by induction that there exist $C, h > 0$
  such that~\eqref{eq:gevrey_vector} holds up to a certain
  $k_0 \in \Z_+$: we will check that
  it also holds for $k = k_0 + 1$.
  We may assume w.l.o.g.~that
  \begin{equation}
    \label{eq:assumptions_Ch}
    C >  2 C_1 B A^{\ell_0 + 1} 4^{s \ell_0} \ell_0!^{2s},
    \quad
    h > \max \{ 2C_1, A 4^s \}.
  \end{equation}
  By~\eqref{eq:last_before_induction} we have
  \begin{equation*}
    \| \Delta_G^{k_0 + 1} u \|_{L^2(T \times G)}
    \leq C_1 (B A^{\ell_0 + k_0 + 2} 4^{s(\ell_0 + k_0 + 1)} \ell_0!^{2s} (k_0 + 1)!^{2s} + Ch^{k_0} k_0!^{2s})
  \end{equation*}
  where
  \begin{equation*}
    \frac{C_1 (B A^{\ell_0 + k_0 + 2} 4^{s(\ell_0 + k_0 + 1)} \ell_0!^{2s} (k_0 + 1)!^{2s} + C h^{k_0} k_0!^{2s})}{C h^{k_0 + 1} (k_0 + 1)!^{2s}} 
    = \frac{C_1 B A^{\ell_0 + 1} 4^{s\ell_0} \ell_0!^{2s}}{C} \left( \frac{A 4^s}{h} \right)^{k_0 + 1} + \frac{C_1}{ h (k_0 + 1)^{2s}}
  \end{equation*}
  is less than $1$
  thanks to~\eqref{eq:assumptions_Ch}.
  This concludes our proof.
\end{proof}

Notice that if
$u \in \cinfty(T \times G)$ is a
$s$-Gevrey vector for $\Delta_G$
then 
\begin{equation*}
  \lambda^k \| \mathcal{F}_\lambda^G(u) \|_{L^2(T \times G)}
  = \| \mathcal{F}_\lambda^G( \Delta_G^k u) \|_{L^2(T \times G)}
  \leq \| \Delta_G^k u \|_{L^2(T \times G)}
  \leq C h^k k!^{2s}
\end{equation*}
for some $C, h > 0$ independent
of both $\lambda \in \sigma(\Delta_G)$
and $k \in \Z_+$. Assuming that $h>1$ we obtain
\begin{equation*}
 (1+ \lambda)^k \| \mathcal{F}_\lambda^G(u) \|_{L^2(T \times G)}  \leq C  (2h)^k k!^{2s},
\end{equation*}
which, by standard computations,
is equivalent to~\eqref{eq:gevrey_vector_exp}
for some constants $C', h' > 0$.
Summing up with the results
from Section~\ref{sec:microlocal}
we immediately get:
\begin{Thm}
  \label{thm:smooth_hyp_implies_gevrey}
  Let $P$ be a real-analytic LPDO on $T \times G$
  belonging to class~$\mathcal{T}$.
  If $P$ is globally hypoelliptic in $T \times G$
  then for each $s \geq 1$
  it is also globally $\gev^s$-hypoelliptic:
  \begin{equation*}
    \forall u \in \D'(T \times G), \ Pu \in \gev^s(T \times G) \Longrightarrow u \in \gev^s(T \times G).
  \end{equation*}
\end{Thm}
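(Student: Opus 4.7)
The plan is to string together the machinery already developed: the microlocal lemma of Section~\ref{sec:microlocal} gives decay for the ``$T$-direction'' Fourier data, the Gevrey-vector proposition of Section~\ref{sec:gevrey_vectors} gives decay for the ``$G$-direction'' Fourier data, and Proposition~\ref{prop:partial_smoothness_converse} fuses both into full Gevrey regularity.

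More precisely, let $u \in \D'(T \times G)$ with $Pu \in \gev^s(T \times G) \subset \cinfty(T \times G)$. First I would invoke global hypoellipticity of $P$ to upgrade $u$ to $\cinfty(T \times G)$; this upgrade is necessary because the Gevrey-vector proposition requires a smooth $u$ as its input. Next, applying Lemma~\ref{lem:micr_from_fio_gevrey} (which uses the class-$\mathcal{T}$ hypothesis through the ellipticity of $P_0$ to control the wavefront set of $u$ along conormals to slices $\{t\}\times G$) yields $\tilde{u}(\phi) \in \gev^s(T)$ for every $\phi \in \gev^s(G)$. Feeding this into Lemma~\ref{lem:global_ineq_microl_hormander_gevrey} produces the cone estimate~\eqref{eq:gevrey_cone_estimate} on some $\Gamma_\theta$, which is hypothesis (1) of Proposition~\ref{prop:partial_smoothness_converse}.

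It remains to verify hypothesis (2), namely the exponential decay~\eqref{eq:gevrey_vector_exp} of $\|\mathcal{F}^G_\lambda(u)\|_{L^2(T\times G)}$. Here I would use the fact (established in Section~2) that every $G$-invariant $P$ commutes with $\Delta_G$, together with global hypoellipticity and $Pu \in \gev^s(T \times G)$, to apply the unnumbered Proposition at the start of Section~\ref{sec:gevrey_vectors}. This gives constants $C,h>0$ with $\|\Delta_G^k u\|_{L^2(T\times G)} \leq C h^k k!^{2s}$ for all $k$, i.e.~$u$ is an $s$-Gevrey vector for $\Delta_G$. The short computation given in the excerpt immediately after that Proposition (using $\lambda^k \|\mathcal{F}^G_\lambda(u)\|_{L^2} = \|\mathcal{F}^G_\lambda(\Delta_G^k u)\|_{L^2} \leq \|\Delta_G^k u\|_{L^2}$ and optimizing in $k$) then converts this into the desired bound~\eqref{eq:gevrey_vector_exp}.

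With both hypotheses of Proposition~\ref{prop:partial_smoothness_converse} in hand, it follows that $u \in \gev^s(T \times G)$, completing the proof. There is no genuine obstacle at this stage: the hard analytic work was done in Sections~\ref{sec:microlocal} and~\ref{sec:gevrey_vectors}, and the theorem amounts to observing that the two independent decay estimates produced there exactly match the two hypotheses of the fusion proposition. The only point to be careful about is verifying that the initial a~priori weakening $u \in \D'$ causes no trouble: global hypoellipticity of $P$ (applied to $Pu \in \gev^s \subset \cinfty$) instantly places $u$ in $\cinfty(T \times G)$, after which every subsequent result applies without restriction.
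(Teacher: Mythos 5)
Your proposal is correct and follows exactly the paper's own argument: use global hypoellipticity to get $u \in \cinfty(T \times G)$, obtain the cone estimate~\eqref{eq:gevrey_cone_estimate} from Lemmas~\ref{lem:micr_from_fio_gevrey} and~\ref{lem:global_ineq_microl_hormander_gevrey}, obtain~\eqref{eq:gevrey_vector_exp} from the Gevrey-vector proposition of Section~\ref{sec:gevrey_vectors} (using $[P,\Delta_G]=0$), and conclude via Proposition~\ref{prop:partial_smoothness_converse}. Nothing further is needed.
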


\section{A general result and an application on global Gevrey solvability}
\label{sec:solvability}

Our goal in this section
is to introduce an abstract notion
of hypoellipticity of an operator
and show that this notion implies
closedness of range of the same operator.
As a consequence of this relationship
we prove that a very weak notion
of Gevrey hypoellipticity
is sufficient to global solvability
in the corresponding setup.

We take a look at the
\emph{category of pairs} of topological vector spaces:
its objects are $2$-tuples of topological vector spaces
$(E^\sharp, E)$,
where $E$ is a linear subspace of $E^\sharp$ carrying a  topology finer
  than that inherited from $E^\sharp$,
while its morphisms are \emph{maps of pairs}
$T: (E^\sharp, E) \to (F^\sharp, F)$,
meaning that $T: E^\sharp \to F^\sharp$ is a  continuous linear map
such that $T(E) \sset F$
and the induced map
$T: E \to F$
is continuous.
In this context:
\begin{Def}
  We shall say that $T$ satisfies:
  \begin{itemize}
  \item \emph{property~$(\mathcal{H})$} if
    for every $u \in E^\sharp$
    such that $Tu \in F$
    we have that $u \in E$;
  \item \emph{property~$(\mathcal{H}')$} if
    for every $u \in E^\sharp$
    such that $Tu \in F$
    there exists $v \in E$
    such that $Tv = Tu$.
  \end{itemize}
\end{Def}

Clearly 
$(\mathcal{H})$ holds 
if and only if
$T$ satisfies both~$(\mathcal{H}')$
and $\ker T \sset E$.
Moreover,
one has that
$(E^\sharp / \ker T, E / (E \cap \ker T))$
is also a pair of topological vector spaces
and $T$ descends to the quotient as a map of pairs
\begin{equation}
  \label{eq:Tprime}
  T': (E^\sharp / \ker T, E / (E \cap \ker T)) \lra (F^\sharp, F)
\end{equation}
and a simple argument shows that
$T$ satisfies~$(\mathcal{H}')$
if and only if
$T'$ satisfies~$(\mathcal{H})$.

Our goal is to investigate
closedness of the range of
$T: E \to F$
as a continuous linear map.
\begin{Lem}
  \label{lem:graph}
  Let $T: (E^\sharp, E) \to (F^\sharp, F)$
  be a map of pairs of topological vector spaces
  with $F^\sharp$ Hausdorff.
  If $T$ satisfies~$(\mathcal{H})$
  then the graph of $T: E \to F$
  is closed in $E^\sharp \times F$.
\end{Lem}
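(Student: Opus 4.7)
The plan is to simply unwind the definitions. Recall that the graph in question is
\[
  \Gamma \dfn \{ (u, Tu) \st u \in E \} \sset E^\sharp \times F,
\]
so I need to show that whenever a net $(u_\alpha)$ in $E$ satisfies $u_\alpha \to u$ in $E^\sharp$ and $Tu_\alpha \to f$ in $F$, the limit $(u,f)$ again lies in $\Gamma$, i.e.~$u \in E$ and $Tu = f$.

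First I would identify the limit $f$ with $Tu$. By hypothesis $T : E^\sharp \to F^\sharp$ is continuous, so from $u_\alpha \to u$ in $E^\sharp$ I get $Tu_\alpha \to Tu$ in $F^\sharp$. On the other hand the topology of $F$ is finer than the one inherited from $F^\sharp$, so the convergence $Tu_\alpha \to f$ in $F$ forces the same convergence in $F^\sharp$. Since $F^\sharp$ is Hausdorff, limits of nets are unique and we conclude $Tu = f$. In particular $Tu = f \in F$.

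Having $u \in E^\sharp$ and $Tu \in F$, property $(\mathcal{H})$ yields immediately $u \in E$. Thus $(u,f) = (u, Tu) \in \Gamma$, finishing the proof. There is no real obstacle here; the only subtlety worth flagging is that the argument works with nets (not just sequences) since $E^\sharp$ and $F$ need not be first countable, and that the Hausdorff assumption on $F^\sharp$ is used exactly once, to ensure uniqueness of the limit when comparing $Tu$ and $f$.
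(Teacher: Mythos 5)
Your argument is correct and coincides with the paper's own proof: both take a net $(u_\alpha)$ in $E$ converging in $E^\sharp \times F$, use continuity of $T: E^\sharp \to F^\sharp$ together with the fact that the topology of $F$ is finer than that induced by $F^\sharp$ to identify the two limits via the Hausdorff property, and then invoke $(\mathcal{H})$ to conclude $u \in E$. Nothing to add.
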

\begin{proof}
  We prove that
  the range of the continuous map
  \begin{equation*}
    \TR{\gamma_T}{u}{E}{(u, Tu)}{E^\sharp \times F}
  \end{equation*}
  is closed.
  Take $\{ u_\alpha \}$ a net in $E$
  such that
  $(u_\alpha, Tu_\alpha) \to (u, f)$ in $E^\sharp \times F$
  for some $(u, f) \in E^\sharp \times F$. Then:
  \begin{itemize}
  \item $u_\alpha \to u$ in $E^\sharp$,
    and since $T: E^\sharp \to F^\sharp$ is continuous
    we have that $Tu_\alpha \to Tu$ in $F^\sharp$;
  \item $Tu_\alpha \to f$ in $F$,
    hence also in $F^\sharp$.
  \end{itemize}
  Since $F^\sharp$ is Hausdorff
  we have that $Tu = f \in F$:
  therefore $u \in E$ thanks to property~$(\mathcal{H})$,
  hence $(u, f) = (u, Tu) \in \ran \gamma_T$.
\end{proof}

From here on
we focus on the particular situation
in which
$E, E^\sharp, F$ are DFS spaces,
and we fix
$\{E_j\}_{j \in \Z_+}$, $\{E^\sharp_k\}_{k \in \Z_+}$, $\{F_k\}_{k \in \Z_+}$
injective sequences of Banach spaces
with compact inclusion maps
whose injective limits are
$E, E^\sharp, F$ respectively.
We also assume the following condition
(stronger than $E \hookrightarrow E^\sharp$):
\begin{equation}
  \label{eq:sp_incl}
  \text{$E \hookrightarrow E^\sharp_0$ continuously}.
\end{equation}

\begin{Thm}
  \label{thm:Hclosedrange}
  Let $E, E^{\sharp}, F$ be DFS as above
  and $F^{\sharp}$ be a Hausdorff space.
  If $T$ satisfies~$(\mathcal{H})$
  then $T: E \to F$ has closed range.
\end{Thm}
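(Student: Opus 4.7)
The plan is to use the DFS structure of $F$ to localize closed range at each Banach step: since $F = \varinjlim F_k$ compactly, a subset of $F$ is closed iff its intersection with each $F_k$ is closed in $F_k$, so it suffices to show that $\ran T \cap F_k$ is closed in $F_k$ for every $k \in \Z_+$. To this end, for each $k$ I would introduce
\[
  \tilde E_k \dfn \{u \in E^\sharp_0 \st Tu \in F_k\}
\]
equipped with the graph-type norm $\|u\|_{\tilde E_k} \dfn \|u\|_{E^\sharp_0} + \|Tu\|_{F_k}$. Continuity of $T|_{E^\sharp_0}: E^\sharp_0 \to F^\sharp$ and Hausdorffness of $F^\sharp$ make $\tilde E_k$ a Banach space; property~$(\mathcal H)$ forces $\tilde E_k \sset E$; by construction $T(\tilde E_k) = \ran T \cap F_k$ and $\ker T \sset \tilde E_k$. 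Next I would apply De Wilde's closed graph theorem to the inclusion $\tilde E_k \hookrightarrow E$: its graph is closed in $\tilde E_k \times E$ because both topologies are finer than the Hausdorff $E^\sharp$-topology, and since $\tilde E_k$ is Banach (hence ultrabornological) while $E$ is DFS (hence webbed), the inclusion is continuous; Grothendieck's factorization theorem then supplies $j(k) \in \Z_+$ with a continuous embedding $\tilde E_k \hookrightarrow E_{j(k)}$.

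The crux is to show that the induced map $\bar T_k: \tilde E_k/\ker T \to F_k$ is bounded below. Arguing by contradiction, suppose there were $u_n \in \tilde E_k$ with $\|[u_n]\|_{\tilde E_k/\ker T} = 1$ and $\|Tu_n\|_{F_k} \to 0$. Choosing representatives with $\|u_n\|_{\tilde E_k} \leq 2$, the factorization $\tilde E_k \hookrightarrow E_{j(k)}$ keeps $\{u_n\}$ bounded in $E_{j(k)}$, and the compact DFS inclusion $E_{j(k)} \hookrightarrow E_{j(k)+1}$ extracts a subsequence $u_{n_\ell} \to v$ in $E$. Continuity of $T: E \to F$ together with $Tu_{n_\ell} \to 0$ in $F_k \hookrightarrow F$ and the Hausdorffness of $F$ forces $Tv = 0$, hence $v \in \ker T$. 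But then $u_{n_\ell} - v \to 0$ in $E \hookrightarrow E^\sharp_0$ and $T(u_{n_\ell} - v) = Tu_{n_\ell} \to 0$ in $F_k$, so $\|u_{n_\ell} - v\|_{\tilde E_k} \to 0$; since $v \in \ker T$, this gives $\|[u_{n_\ell}]\|_{\tilde E_k/\ker T} \to 0$, contradicting $\|[u_n]\|_{\tilde E_k/\ker T} = 1$.

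The bounded-below estimate gives, by the classical Banach-space argument, that $\ran \bar T_k = T(\tilde E_k) = \ran T \cap F_k$ is closed in $F_k$ for every $k$, so by the DFS characterization of closed sets $\ran T$ is closed in $F$. The main obstacle is precisely the contradiction step above: property~$(\mathcal H)$ is merely set-theoretic, and manufacturing a quantitative estimate requires carefully weaving together the Banach structure of $\tilde E_k$ (whose completeness rests on Hausdorff $F^\sharp$), its continuous factorization through a step $E_{j(k)}$ (via De Wilde), and the compactness built into the DFS structure of $E$; the continuous embedding $E \hookrightarrow E^\sharp_0$ assumed in~\eqref{eq:sp_incl} is the decisive geometric hypothesis that bridges convergence in $E$ to the contradicting estimate in the $E^\sharp_0$-norm.
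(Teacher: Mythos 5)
Your argument is correct, but it takes a genuinely different --- and essentially self-contained --- route from the paper's. The paper first proves (Lemma~\ref{lem:graph}) that the graph of $T: E \to F$ is closed in $E^\sharp \times F$, which is exactly where Hausdorffness of $F^\sharp$ enters, and then delegates the rest to two abstract results of~\cite{araujo18}: Lemma~2.3 there converts closedness of the range of the injective map $\gamma_T$ between the DFS spaces $E$ and $E^\sharp \times F$ into the step-localization property ``$Tu \in F_k \Rightarrow u \in E_j$'', and Theorem~2.5 there converts that property into closedness of the range of $T: E \to F$. You re-prove this machinery inline: the completeness of your graph spaces $\tilde{E}_k$ rests on Hausdorffness of $F^\sharp$ and plays the role of Lemma~\ref{lem:graph}; De~Wilde plus Grothendieck's factorization theorem recover precisely the step inclusion $\tilde{E}_k \hookrightarrow E_{j(k)}$, i.e.\ the content of~\cite[Lemma~2.3]{araujo18} in this situation (note that property~$(\mathcal{H})$ and the embedding~\eqref{eq:sp_incl} are used exactly where the paper uses them, to identify $\tilde{E}_k$ with $\{u \in E : Tu \in F_k\}$); and your compactness/contradiction argument showing that $\bar{T}_k$ is bounded below is in substance a proof of~\cite[Theorem~2.5]{araujo18}. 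The trade-off is clear: the paper's proof is short and modular, while yours makes explicit where each hypothesis (Hausdorff $F^\sharp$, condition~\eqref{eq:sp_incl}, compactness of the linking maps of $E$, the DFS structure of $F$) actually does work.

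One step you assert without justification deserves a word: the reduction ``$\ran T$ is closed in $F$ if $\ran T \cap F_k$ is closed in $F_k$ for every $k$'' is true but not formal for inductive limits in general. For DFS spaces it follows from two standard facts --- every convergent sequence in an LS-space is contained in, and convergent in, some step $F_k$, and DFS spaces are sequential (Webb's theorem on strong duals of Fr\'echet--Montel spaces) --- so a reference or a one-line argument for the linear subspace $\ran T$ should be supplied. With that, the proof stands.
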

\begin{proof}
  By Lemma~\ref{lem:graph}
  the graph map
  $\gamma_T$
  has closed range.
  Since both $E$ and $E^\sharp \times F$
  are DFS spaces,
  and moreover $\gamma_T$ is injective,
  the following criterion for
  closedness of its range
  applies~\cite[Lemma~2.3]{araujo18}:
  for every $k \in \Z_+$
  there exists $j \in \Z_+$
  such that
  \begin{equation*}
    \forall u \in E, \ \gamma_T(u) \in E^\sharp_k \times F_k
    \Longrightarrow u \in E_j
  \end{equation*}
  which, thanks to~\eqref{eq:sp_incl}
  and the definition of $\gamma_T$,
  is equivalent to
  \begin{equation*}
    \forall u \in E, \ Tu \in F_k
    \Longrightarrow u \in E_j
  \end{equation*}
  in turn implying that
  $T: E \to F$
  has closed range~\cite[Theorem~2.5]{araujo18}.
\end{proof}
Since the class of DFS spaces
is closed under
taking closed subspaces and quotients~\cite{komatsu67},
the whole picture above is preserved
if we replace $T$ by $T'$~\eqref{eq:Tprime}.
For instance,
$H \dfn E / (E \cap \ker T)$
is a DFS space;
actually,
$H_j \dfn E_j / (E_j \cap \ker T)$
is a Banach space,
the inclusion map
$E_j \hookrightarrow E_{j'}$ ($j < j'$)
descends to a compact injection
$H_j \hookrightarrow H_{j'}$,
and we have
\begin{equation*}
  H \cong \varinjlim H_j
\end{equation*}
where we regard $H_j$ as a subspace of $H$
by means of the identification
\begin{equation*}
  u + (E_j \cap \ker T) \in H_j
  \longmapsto
  u + (E \cap \ker T) \in H
\end{equation*}
(which the reader may check at once
to be well-defined, continuous and injective).
The same goes for $E^\sharp / \ker T$,
and~\eqref{eq:sp_incl} descends
to a continuous injection
$E / (E \cap \ker T) \hookrightarrow E^\sharp_0 / (E^\sharp_0 \cap \ker T)$
-- the first step in the sequence that
naturally defines the injective limit
topology on $E^\sharp / \ker T$.
\begin{Cor}
  \label{cor:abstract_agh_dfs}
  If $T$ satisfies~$(\mathcal{H}')$
  then $T: E \to F$ has closed range.
\end{Cor}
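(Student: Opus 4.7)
The natural plan is to reduce the statement to Theorem~\ref{thm:Hclosedrange} by passing to the quotient map $T'$ introduced in~\eqref{eq:Tprime}. Recall from the discussion preceding the corollary that $T$ satisfies $(\mathcal{H}')$ if and only if $T'$ satisfies $(\mathcal{H})$, so once we verify that $T'$ fits into the hypotheses of Theorem~\ref{thm:Hclosedrange}, its range will automatically be closed.

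First, I would observe that all the structural ingredients required by Theorem~\ref{thm:Hclosedrange} have already been assembled in the paragraph right before the statement: $H = E/(E \cap \ker T)$ is a DFS space realized as the injective limit of the Banach spaces $H_j = E_j / (E_j \cap \ker T)$ with compact inclusions, and the analogous construction turns $E^\sharp / \ker T$ into a DFS space whose defining sequence starts with $E^\sharp_0 / (E^\sharp_0 \cap \ker T)$. Moreover, the continuous injection $E \hookrightarrow E^\sharp_0$ of~\eqref{eq:sp_incl} descends to a continuous injection $H \hookrightarrow E^\sharp_0 / (E^\sharp_0 \cap \ker T)$, giving the analogue of~\eqref{eq:sp_incl} for $T'$. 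The target pair $(F^\sharp, F)$ is unchanged, so $F$ is still DFS and $F^\sharp$ is still Hausdorff.

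Next, since $T$ satisfies $(\mathcal{H}')$, the map $T'$ satisfies $(\mathcal{H})$, and Theorem~\ref{thm:Hclosedrange} applied to $T'$ yields that $T'\colon H \to F$ has closed range. Finally, I would note that the ranges coincide: for any $u \in E$, $T'(u + (E \cap \ker T)) = Tu$, so $\ran T' = \ran T$ as subsets of $F$. Hence $T\colon E \to F$ has closed range.

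There is no real obstacle: the only thing to be careful about is confirming that the filtration $\{H_j\}$ together with its compact injections does provide $H$ with a DFS structure compatible with the continuous inclusion into $E^\sharp_0 / (E^\sharp_0 \cap \ker T)$, but this is exactly the content of the preparatory paragraph, invoking the closure of the class of DFS spaces under closed subspaces and quotients from~\cite{komatsu67}. Thus the corollary is a direct consequence of Theorem~\ref{thm:Hclosedrange} applied to the quotient map $T'$.
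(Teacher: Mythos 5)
Your proof is correct and follows exactly the paper's own argument: pass to the quotient map $T'$, use the equivalence of $(\mathcal{H}')$ for $T$ with $(\mathcal{H})$ for $T'$, invoke the preparatory paragraph to see that the quotient setup still satisfies the hypotheses of Theorem~\ref{thm:Hclosedrange}, and conclude since $\ran T' = \ran T$. No discrepancies to report.
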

\begin{proof}
  In that case,
  $T'$~\eqref{eq:Tprime}
  satisfies~$(\mathcal{H})$:
  by our previous digression,
  we are entitled to apply
  Theorem~\ref{thm:Hclosedrange}
  to it,
  yielding closedness of the range
  of $T': E / (E \cap \ker T) \to F$,
  which equals that of $T: E \to F$.
\end{proof}

\subsection{Global Gevrey solvability}

Let $M$ be a compact real-analytic manifold
as in Section~\ref{sec:prelim}.
Given $s \geq 1$,
we denote by $\D'_s(M)$ the topological dual of $\gev^s(M)$,
the so-called space of Gevrey ultradistributions of order $s$
(when $s = 1$ this is the space of hyperfunctions on $M$).

Given $P$ a Gevrey LPDO on $M$,
we are interested in solving the equation
$Pu = f$ for $f \in \gev^s(M)$.
If there exists a solution $u \in \gev^s(M)$ to
this problem
then for any $v \in \D'_s(M)$ we have
\begin{equation*}
  \langle v, f \rangle
  = \langle v, Pu \rangle
  = \langle \transp{P}v, u \rangle
\end{equation*}
where $\transp{P}$ denotes
the transpose of $P$.
A necessary condition on $f$
to the solvability of
$Pu = f$ is then that
\begin{equation}
  \label{eq:cc_gevrey}
    \langle v, f \rangle = 0
    \quad \text{for every $v \in \D'_s(M)$ such that $\transp{P}v = 0$}.
  \end{equation}
This leads us to the following:
\begin{Def}
  We say that
  $P$ is \emph{globally $\gev^s$-solvable} if
  for every $f \in \gev^s(M)$
  satisfying~\eqref{eq:cc_gevrey}
  there exists $u \in \gev^s(M)$
  such that
  $Pu = f$.
\end{Def}
It turns out~\cite[Lemma~2.2]{araujo18}
that $P$ is globally $\gev^s$-solvable
if and only if the map between DFS spaces
$P: \gev^s(M) \to \gev^s(M)$
has closed range.
In~\cite[Theorem~3.5]{afr20b}
we proved that the following property,
weaker that global hypoellipticity,
implies global solvability of $P$
in the smooth setting
(i.e.~closedness of the range of
$P: \cinfty(M) \to \cinfty(M)$):
\begin{equation*}
  \forall u \in \D'(M), \ Pu \in \cinfty(M)
  \Longrightarrow
  \text{$\exists v \in \cinfty(M)$ such that $Pv = Pu$}.
\end{equation*}
Corollary~\ref{cor:abstract_agh_dfs} gives,
in the Gevrey framework,
that a weak notion of global Gevrey hypoellipticity
implies global solvability.
\begin{Thm}
  \label{thm:gevrey_agh}
  Let $s \geq 1$ and suppose that
  for some $s_+ > s$ we have
  \begin{equation}
    \label{eq:agh_gevrey}
    \forall u \in \gev^{s_+}(M), \ Pu \in \gev^s(M)
    \Longrightarrow \text{$\exists v \in \gev^s(M)$ such that $Pv = Pu$}.
  \end{equation}
  Then $P$ is globally $\gev^s$-solvable.
\end{Thm}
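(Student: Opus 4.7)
The plan is to recognize hypothesis~\eqref{eq:agh_gevrey} as essentially property $(\mathcal{H}')$ for a suitable map of pairs, and reduce everything to Corollary~\ref{cor:abstract_agh_dfs}. Concretely, I would set $E \dfn F \dfn \gev^s(M)$ and $E^\sharp \dfn F^\sharp \dfn \gev^{s_+}(M)$. The inclusion $\gev^s(M) \hookrightarrow \gev^{s_+}(M)$ is continuous since any bound $\| (I + \Delta_M)^k f \|_{L^2} \leq C h^k k!^{2s}$ trivially gives $\leq C h^k k!^{2s_+}$. All four spaces are DFS (hence Hausdorff), and $P$, being a Gevrey LPDO, preserves each class continuously, so $T \dfn P$ is a map of pairs $(E^\sharp, E) \to (F^\sharp, F)$. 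By construction, the assumed implication~\eqref{eq:agh_gevrey} is literally property~$(\mathcal{H}')$ for $T$.

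It only remains to fix Banach sequences realizing the DFS structures and to verify the technical condition~\eqref{eq:sp_incl}. I would take $E^\sharp_k \dfn \gev^{s_+, k+1}(M)$, $E_j \dfn \gev^{s, j+1}(M)$, and $F_k \dfn \gev^{s, k+1}(M)$, with compact transition maps granted by the standard Gevrey theory recalled in Section~\ref{sec:prelim}. For~\eqref{eq:sp_incl} one checks $E \hookrightarrow E^\sharp_0 = \gev^{s_+, 1}(M)$; at the level of the Banach steps, for any $f \in \gev^{s, h}(M)$ we have
\begin{equation*}
  \sup_k k!^{-2s_+} \| (I + \Delta_M)^k f \|_{L^2} \leq \Big( \sup_k h^{-k} k!^{-2s} \| (I + \Delta_M)^k f \|_{L^2} \Big) \cdot \sup_k h^k k!^{-2(s_+ - s)},
\end{equation*}
where the last factor is finite because $s_+ > s$; passing to the injective limit yields the desired continuous embedding.

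Having verified all the hypotheses, Corollary~\ref{cor:abstract_agh_dfs} yields that $P: \gev^s(M) \to \gev^s(M)$ has closed range. By the characterization cited just before the theorem (\cite[Lemma~2.2]{araujo18}), closedness of the range of this map between DFS spaces is equivalent to global $\gev^s$-solvability of $P$, which concludes the proof. The substantive content of the theorem is entirely absorbed into the abstract Corollary~\ref{cor:abstract_agh_dfs}; the only nontrivial compatibility check, namely~\eqref{eq:sp_incl}, follows effortlessly from $s < s_+$, so no genuine obstacle arises in this reduction.
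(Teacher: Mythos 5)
Your proof is correct and follows essentially the same route as the paper: identify~\eqref{eq:agh_gevrey} with property~$(\mathcal{H}')$ for $T \dfn P$ and invoke Corollary~\ref{cor:abstract_agh_dfs}. The only (immaterial) difference is your choice $F^\sharp = \gev^{s_+}(M)$ where the paper takes $F^\sharp = \D'_s(M)$ --- since only the Hausdorff property of $F^\sharp$ is used, both work --- and your explicit verification of~\eqref{eq:sp_incl} via $s < s_+$ simply fills in a detail the paper leaves implicit.
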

\begin{proof} Let
  \begin{equation*}
    E \dfn \gev^s(M),
    \quad
    E^\sharp \dfn \gev^{s_+}(M),
    \quad
    F \dfn \gev^s(M),
    \quad
    F^\sharp \dfn \D'_s(M).
  \end{equation*}
  Since with these choices~\eqref{eq:agh_gevrey}
  is per se property~$(\mathcal{H}')$
  for $T \dfn P$,
  all the hypotheses of
  Corollary~\ref{cor:abstract_agh_dfs}
  are automatically fulfilled.
\end{proof}

The converse of
Theorem~\ref{thm:gevrey_agh}
holds for operators in the class~$\mathcal{T}$:
\begin{Prop}
  \label{prop:converse_agh_classT}
  Let $P$ be a real-analytic LPDO
  on $T \times G$ in class $\mathcal{T}$.
  If $P$ is
  globally $\gev^s$-solvable
  then
  \begin{equation*}
    \forall u \in \D'(T \times G), \ Pu \in \gev^s(T \times G)
    \Longrightarrow \text{$\exists v \in \gev^s(T \times G)$ such that $Pv = Pu$}.
  \end{equation*}
\end{Prop}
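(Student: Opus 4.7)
The plan is to invoke the characterization of global $\gev^s$-solvability: it suffices to show that $f \dfn Pu \in \gev^s(T \times G)$ satisfies the compatibility condition~\eqref{eq:cc_gevrey}, namely $\langle w, f \rangle = 0$ for every $w \in \D'_s(T \times G)$ with $\transp{P} w = 0$. The naive manipulation $\langle w, Pu \rangle = \langle \transp{P} w, u \rangle = 0$ breaks down, however, because $u$ is only a distribution while $\transp{P} w$ is an ultradistribution, so these objects cannot be paired directly.

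The key idea is to approximate $u$ by genuinely Gevrey functions by means of the partial Fourier series. By Corollary~\ref{cor:ulambdagevrey} (which applies precisely because $P \in \mathcal{T}$), each $\mathcal{F}_\lambda^G(u)$ already lies in $\gev^s(T; E_\lambda^G)$, so the truncations
\begin{equation*}
  u_N \dfn \sum_{\lambda \leq N} \mathcal{F}_\lambda^G(u)
\end{equation*}
belong to $\gev^s(T \times G)$ as finite sums of Gevrey sections. Since $P$ commutes with $\Delta_G$ and therefore with each $\mathcal{F}_\lambda^G$, one has $Pu_N = \sum_{\lambda \leq N} \mathcal{F}_\lambda^G(f)$; and by Proposition~\ref{Pro:partial-convergence-Gs}, since $f \in \gev^s(T \times G)$, these partial sums converge to $f$ in the topology of $\gev^s(T \times G)$.

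Continuity of $w$ on $\gev^s(T \times G)$ then yields
\begin{equation*}
  \langle w, f \rangle
  = \lim_{N \to \infty} \langle w, Pu_N \rangle
  = \lim_{N \to \infty} \langle \transp{P} w, u_N \rangle
  = 0,
\end{equation*}
where the second equality is legitimate as a pairing in $\D'_s \times \gev^s$ because $u_N \in \gev^s(T \times G)$, and the last uses $\transp{P} w = 0$. This verifies~\eqref{eq:cc_gevrey}, and the hypothesis of global $\gev^s$-solvability of $P$ supplies the desired $v \in \gev^s(T \times G)$ with $Pv = f$.

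The only substantive ingredient, and the unique place where membership in class~$\mathcal{T}$ intervenes, is the appeal to Corollary~\ref{cor:ulambdagevrey}, which itself rests on the microlocal input of Lemma~\ref{lem:micr_from_fio_gevrey}. Without ellipticity of $P_0$ there would be no reason to expect the individual partial Fourier coefficients $\mathcal{F}_\lambda^G(u)$ to be Gevrey, and the truncate-and-transpose trick would collapse. Once that ingredient is granted the argument is a routine ``approximate, transpose, take limits'' pattern.
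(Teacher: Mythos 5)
Your argument is correct and essentially coincides with the paper's own proof: both hinge on the facts that the truncations $u_N = \sum_{\lambda \le N}\mathcal{F}^G_\lambda(u)$ lie in $\gev^s(T\times G)$ (you cite Corollary~\ref{cor:ulambdagevrey}, the paper instead uses ellipticity of each $\widehat{P}_\lambda$ applied to $\widehat{P}_\lambda\mathcal{F}^G_\lambda(u)=\mathcal{F}^G_\lambda(Pu)$ --- either works, and both require $P\in\mathcal{T}$) and that $Pu_N \to Pu$ in $\gev^s(T\times G)$ by Proposition~\ref{Pro:partial-convergence-Gs}. The only divergence is the final bookkeeping: you verify the compatibility condition~\eqref{eq:cc_gevrey} and invoke the definition of global $\gev^s$-solvability directly, whereas the paper observes that $Pu$ lies in the closure of the range of $P:\gev^s(T\times G)\to\gev^s(T\times G)$ and uses the equivalent closed-range formulation; the two endings are interchangeable by the cited characterization of global $\gev^s$-solvability.
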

\begin{proof}
  Let $u \in \D'(T \times G)$ be such that
  $f \dfn Pu \in \gev^s(T \times G)$.
  Thanks to
  Proposition~\ref{Pro:partial-convergence-Gs}
  we have
  \begin{equation}
    \label{eq:solv_implies_agh_gevrey}
    f
    = \sum_{\lambda \in \sigma(\Delta_G)} \mathcal{F}_\lambda^G(f)
    = \lim_{\nu \to \infty} \sum_{|\lambda| \leq \nu} \widehat{P}_\lambda \mathcal{F}_\lambda^G(u)
    = \lim_{\nu \to \infty} P \sum_{|\lambda| \leq \nu} \mathcal{F}_\lambda^G(u)
  \end{equation}
  with convergence in
  $\gev^s(T \times G)$.
  Since each
  $\widehat{P}_\lambda$ is elliptic
  we have that
  $\widehat{P}_\lambda \mathcal{F}_\lambda^G(u) = \mathcal{F}_\lambda^G(f) \in \gev^s(T; E_\lambda^G)$
  implies that
  $\mathcal{F}_\lambda^G(u) \in \gev^s(T; E_\lambda^G)$,
  hence~\eqref{eq:solv_implies_agh_gevrey}
  ensures that $f$
  belongs to the closure of the range of
  $P: \gev^s(T \times G) \to \gev^s(T \times G)$.
  As the latter
  is closed in $\gev^s(T \times G)$
  by assumption,
  there exists
  $v \in \gev^s(T \times G)$ such that
  $Pv = f = Pu$.
\end{proof}

\bibliographystyle{plain}
\bibliography{bibliography}
\end{document}